\thanks{This reseach was supported in part by
    by the National Science Foundation and New York University}
\newcommand{\cT}{\mathcal{T}}
\newcommand{\cM}{\mathcal{M}}
\newcommand{\cN}{\mathcal{N}}
\newcommand{\R}{\mathbb{R}}
\newcommand{\Z}{\mathbb{Z}}
\newcommand{\N}{\mathbb{N}}
\newcommand{\prob}{\mathbb{P}}
\newcommand{\E}{\mathbb{E}}
\newcommand{\tensor}{\otimes}
\newcommand{\abs}[1]{\lvert#1\rvert}
\newcommand{\norm}[1]{\lvert\lvert#1\rvert\rvert}
\newcommand{\indicator}[1]{\mathbbm{1}_{#1}}
 \newlist{casenv}{enumerate}{4}
 \setlist[casenv]{leftmargin=*,align=left,widest={iiii}}
 \setlist[casenv,1]{label={{\bf\ Case \arabic*.}},ref=\arabic*}
\begin{document}



\section{Introduction}

In this paper, we study the Branching Random Walk (BRW), or directed
polymer, on a binary tree. To fix notation, let $\cT_{N}$ be the
binary tree of depth $N$ and let $\{g_{v}\}_{v\in\cT_{N}\backslash\emptyset}$
be a collection of i.i.d. standard Gaussian random variables indexed
by this tree without its root. We define the Branching Random Walk
by
\[
H(v)=\sum_{\beta\in p(v)}g_{\beta}
\]
 where $p(v)$ is the root-leaf path to $v$ excluding the root. Viewed
as a gaussian process on $\partial T_{N}$, $(H(v))_{v\in\partial\cT_{N}}$
is centered and has covariance structure
\[
\E H_{N}(v)H_{N}(w)=\abs{v\wedge w},
\]
 where $v\wedge w$ denotes the least common ancestor of $v$ and
$w$ and $\abs{\alpha}$ is the depth of $\alpha\in\cT_{N}$. In particular,
\[
\E H_{N}(v)H_{N}(v)=N.
\]
One can think of the root-leaf paths on the tree as polymer configurations
and $H_{N}$ as an energy. We denote the partition function corresponding
to this polymer model by 
\[
Z_{N}(\beta)=\sum e^{\beta H_{N}(v)}
\]
 and the free energy by 
\[
F_{N}(\beta)=\frac{1}{N}\E\log Z_{N}(\beta).
\]
 If we denote the Gibbs measure by 
\[
G_{N,\beta}(v)=\frac{e^{\beta H_{N}(v)}}{Z}
\]
then this induces a (random) probability measure on the leaves $\Sigma_{N}=\partial\cT_{N}$.
In the following $\left\langle \cdot\right\rangle _{N}$ denotes integration
with respect to this measure or the corresponding product measures.
We will drop the subscript $N$ when it is unambiguous. Finally, let
$R(v,w)=\frac{1}{N}\abs{v\wedge w}$, and let $R_{12}=R(v_{1},v_{2})$,
which we call the overlap between two polymers. An important object
in the study of mean field spin glasses is the (mean) overlap distribution
\begin{equation}
\mu_{N,\beta}(A)=\E G_{N,\beta}^{\tensor2}(R_{12}\in A).\label{eq:mu_N-def}
\end{equation}

The Branching Random Walk, was introduced to the mean field spin glass
community in \cite{DerSpo88}. There, Derrida and Spohn argued that
the statistical physics of this model should be similar to the Random
Energy Model (REM). They predicted that the overlap distribution should
consist of one atom at high temperature and two atoms at low temperature.
In the language of Replica theory it should be Replica Symmetric (RS)
at high temperature and one step Replica Symmetry breaking (1RSB)
at low temperature. Furthermore, they predicted that, as with the
REM, the limiting Gibbs measure of the system should be a Ruelle Probability
Cascade (see the discussion preceding Corollary \ref{cor:ov-rpc-conv} for
a definition). As a consequence, it was suggested \cite{ChauvRou97,DerSpo88}
that the BRW should serve as an intermediate toy model for spin glass
systems, between the REM and the Sherrington-Kirkpatrick (SK) model,
as it is still analytically tractable, while having a key feature
of SK that the REM lacks: a strong local correlation structure. 

The study of replica symmetry breaking in its various forms is the
subject of major research in the mathematical spin glass community.
As such it is of interest to have a few simple, but non-trivial examples
in which Replica Symmetry Breaking can be seen essentially ``by hand''.
In this paper, we give proofs of the predictions of Derrida and Spohn
described above using a combination of arguments that are basic to
both fields. In particular, we avoid the use and analysis of the extremal
process.

We begin first with the study of Replica Symmetry and Replica Symmetry
Breaking. Replica Symmetry above the critical temperature was proved
by Chauvin and Rouault in \cite{ChauvRou97}. Our contribution is
proving Replica Symmetry Breaking below the critical temperature,
and in particular obtaining the mass on the atom at $1$. 
\begin{theorem}
\label{thm:(Derrida-Spohn-conjecture)}Let $\beta_{c}=\sqrt{2\log2}$.
Then 
\begin{equation}
\E G_{N,\beta}^{\tensor2}(R_{12}\in\cdot)\rightarrow\mu_{\beta}(\cdot)=\begin{cases}
\delta_{0} & \beta<\beta_{c}\\
\frac{\beta_{c}}{\beta}\delta_{0}+(1-\frac{\beta_{c}}{\beta})\delta_{1} & \beta\geq\beta_{c}
\end{cases}\label{eq:mubeta-def}
\end{equation}
weakly as measures 
\end{theorem}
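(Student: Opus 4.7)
Since Chauvin--Rouault handle $\beta<\beta_c$, I focus on $\beta\ge\beta_c$. My plan splits the statement into two essentially independent pieces: (I) any weak subsequential limit $\mu$ of $\mu_{N,\beta}$ is supported on $\{0,1\}$; and (II) the atom masses are determined by $\lim_N\E\langle R_{12}\rangle_N$, which I compute from the limiting free energy via Gaussian integration by parts. Combined, (I) and (II) pin down the limit uniquely, so the full sequence converges.

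\textbf{Identifying the masses.} Once (I) is granted, $\mu=p\,\delta_0+(1-p)\,\delta_1$ and it suffices to compute $\lim_N\E\langle R_{12}\rangle_N=1-p$. A direct Gaussian integration by parts applied to $\E\langle H(v_1)\rangle_N$, using the covariance $\E H(v)H(w)=\abs{v\wedge w}$, yields the standard identity
\[
\frac{\partial F_N}{\partial\beta}(\beta)=\beta\bigl(1-\E\langle R_{12}\rangle_N\bigr).
\]
The limiting BRW free energy is classical: $F(\beta)=\log 2+\beta^2/2$ for $\beta\le\beta_c$ and $F(\beta)=\beta\beta_c$ for $\beta\ge\beta_c$, so $F$ is $C^1$ on $\R_{>0}$ with $F'(\beta)=\beta_c$ for $\beta\ge\beta_c$. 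Since $F_N$ is convex in $\beta$, a standard convexity argument lets me interchange the limit with the $\beta$-derivative, yielding $\E\langle R_{12}\rangle\to 1-\beta_c/\beta$ and hence $p=\beta_c/\beta$ for $\beta>\beta_c$; at $\beta=\beta_c$ the formula degenerates to $\delta_0$, consistent with the high-temperature case.

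\textbf{The main obstacle: support on $\{0,1\}$.} This is the technical core. Heuristically, a pair $(v_1,v_2)$ with overlap $q\in(0,1)$ must split at a common ancestor $u$ of depth $\approx qN$, after which the two polymers live in independent copies of a depth $(1-q)N$ BRW. For the Gibbs measure to place non-negligible mass on such a pair, both $H(v_i)$ must come within $O(1)$ of the global maximum $\beta_c N$; but the best one can do in the two subtrees is $H(u)+\beta_c(1-q)N$, short of $\beta_c N$ by order $\beta_c qN-H(u)\gg 1$, which is exponentially penalized in the Gibbs weights. The plan to turn this into a proof, without resorting to the limiting extremal process, is a truncated second moment argument: for any closed $[a,b]\subset(0,1)$, decompose $\E\langle\indicator{R_{12}\in[a,b]}\rangle$ by the depth $k\in[aN,bN]$ of the most recent common ancestor, count the $\sim 2^{2N-k}$ such pairs, Gaussian-tilt to evaluate the numerator, and lower-bound the denominator $Z_N^2$ on a high-probability event from concentration of $\log Z_N$ (including the Bramson-type logarithmic correction in $N$). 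Because $Z_N$ is not self-averaging at the exponential scale in the low-temperature phase, the bound on $Z_N$ must be tight enough to beat the exponential growth from summing over intermediate $k$. I expect that producing this sharp, uniform-in-$k$ lower bound on $\log Z_N$, together with the conditional independence of the two subtrees below $u$, will be the bulk of the technical work.
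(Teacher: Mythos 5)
Your overall strategy is the same as the paper's: (I) reduce to showing the support is $\{0,1\}$, and (II) pin down the masses via the Gibbs--Gaussian integration-by-parts identity $F_N'(\beta)=\beta(1-\E\langle R_{12}\rangle_N)$, convexity, and the Chauvin--Rouault free energy formula. Part (II) is exactly the paper's Corollary~\ref{cor:diff-fe} and the short derivation of Theorem~\ref{thm:(Derrida-Spohn-conjecture)}, and is fine as you state it.

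The gap is in part (I), and it is a substantive one. You propose, in effect, to bound
\[
\E\Big\langle \indicator{R_{12}\in[a,b]}\Big\rangle
\ \le\ e^{-2\beta(m_N-K)}\,\E\!\!\sum_{R_{12}\in[a,b]} e^{\beta(H(v_1)+H(v_2))}\ +\ P\big(Z_N<e^{\beta(m_N-K)}\big),
\]
i.e.\ ``count the $\sim 2^{2N-k}$ pairs, Gaussian-tilt the numerator, lower-bound $Z_N^2$.'' Without a further truncation this fails exponentially: writing $k=uN$, the expected weighted pair count $2^{2N-k}e^{\beta^2(N+k)}$ against $e^{2\beta m_N}$ has exponential rate $N(\beta-\beta_c)^2 + k(\beta^2-\beta_c^2/2)>0$. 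Even if you first restrict to pairs that are both within $O(1)$ of $m_N$ (your heuristic paragraph), the expected count still grows like $e^{cN}$ for $c\sim u(1-u)/(1+u)>0$, because the dominant contribution comes from walkers whose ancestor-value $S(k)$ \emph{overshoots} the line $\beta_c k$. The essential truncation is therefore not the ``high-points'' restriction and certainly not a sharper lower bound on $\log Z_N$ (that part is one line: $Z_N\ge e^{\beta(m_N-K)}$ on the tightness event for $M_N-m_N$). It is the \emph{barrier} restriction --- all walkers stay below $\lambda l + O(\log N)$ for all $l\in[N]$, and additionally avoid the window $[\lambda l,\lambda l+K]$ on the intermediate timescale $l\in N[\epsilon,1-\epsilon]$ --- together with Bramson/ballot-type estimates that produce the extra $T^{-3/2}$ polynomial factors. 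This is exactly what the paper's Lemmas~\ref{lem:(Ballot-type-theorem)}--\ref{lem:walkers-bound} and the events $\Gamma^N_L$, $\Xi^N_{\epsilon,K}$, $E^N_\epsilon$ supply, and it is where all the work is. Your proposal names the right tools in passing (``truncated,'' ``Gaussian-tilt'') but misplaces the bottleneck onto $Z_N$, and as written the plan for controlling the numerator would not close.
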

We now turn toward the characterization of the Gibbs measure for this
system. Our next result is to prove that the Gibbs measure satisfies
a class of identities called the Approximate Ghirlanda-Guerra Identities
which will imply the Ruelle Probability Cascade Structure described
above. To this end, let $(v_{i})$ be $i.i.d.$ draws from $G_{N,\beta}$,
let $R_{ij}=R(v_{i},v_{j})$ and define $R^{n}=\left(R_{ij}\right)_{i,j\in[n]}$.
The doubly infinite array $R=(R_{ij})_{ij\geq1}$ is called the \emph{overlap
array} corresponding to these draws. 
\begin{theorem}
\label{thm:GGI}
The Branching Random Walk satisfies the Approximate
Ghirlanda\\-Guerra identities. 
That is, if $f$ is a bounded measurable
function $[0,1]^{n^{2}}$ then for every $p$, 
\[
\lim_{N\to\infty}\abs{\E\langle f(R^{n})R_{1,n+1}^{p}\rangle _{G_{N,\beta}^{\tensor n+1}}-\frac{1}{n}\left(\E\langle f(R^{n})\rangle _{G_{N,\beta}^{\tensor n}}\E\langle R_{12}^{p}\rangle _{G_{N,\beta}^{\tensor n}}+\sum_{k=2}^{n}\E\langle f(R^{n})R_{1k}^{p}\rangle _{G_{N,\beta}^{\tensor n}}\right)}=0
\]
 \end{theorem}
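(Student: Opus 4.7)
The plan is to first establish the $p=1$ case via Gaussian integration by parts and free energy concentration, and then to use Theorem \ref{thm:(Derrida-Spohn-conjecture)} to reduce the general $p$ case to $p=1$.

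For $p=1$, I would apply Gaussian integration by parts directly to $\frac{1}{N}\E\langle H_N(v_1) f(R^n)\rangle$. Since $\E H_N(v) H_N(w) = N R(v,w)$ and $R(v,v) = 1$ for any leaf $v$, differentiating the Gibbs weights $e^{\beta H(v_k)}/Z$ in each coordinate and collecting the self- and cavity-contributions yields
\[
\tfrac{1}{N}\E\langle H_N(v_1) f(R^n)\rangle = \beta\,\E\langle f\rangle + \beta\sum_{k=2}^n \E\langle R_{1k} f\rangle - n\beta\,\E\langle R_{1,n+1} f\rangle.
\]
Specializing to $f\equiv1$ produces $F_N'(\beta) = \tfrac{1}{N}\E\langle H_N(v_1)\rangle = \beta(1 - \E\langle R_{12}\rangle)$. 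Subtracting $\E\langle f\rangle$ times this specialization from the general formula reduces the $p=1$ identity to the single estimate $\tfrac{1}{N}\E\langle Hf\rangle - \tfrac{1}{N}\E\langle H\rangle\,\E\langle f\rangle \to 0$. This in turn splits as a sum of a disorder-covariance $\mathrm{Cov}\bigl(\tfrac{1}{N}\langle H\rangle,\langle f\rangle\bigr)$ and an expected Gibbs-covariance $\E[\mathrm{Cov}_G(H,\tilde f)]/N$, where $\tilde f(v_1)=\langle f(R^n)\mid v_1\rangle$. The first summand is controlled by Cauchy-Schwarz and concentration of $\tfrac{1}{N}\langle H\rangle$ around $F'(\beta)$, which follows from Gaussian concentration of $\log Z_N$ (Borell-TIS, since $\log Z_N$ is $\beta\sqrt{N}$-Lipschitz in the underlying Gaussians) together with convexity of $F_N$ in $\beta$ and differentiability of the limit $F$. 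The second admits the Cauchy-Schwarz bound $\|f\|_\infty\sqrt{F_N''(\beta)/N}$ via Jensen and the identity $\E[\mathrm{Var}_G(H)]=N F_N''(\beta)$; since for BRW the limit $F$ is explicit and $F_N''(\beta)$ remains bounded at every $\beta$ where $F''$ is continuous, this is also $o(1)$.

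For general $p$, Theorem \ref{thm:(Derrida-Spohn-conjecture)} suffices. Because the limiting marginal overlap law $\mu_\beta$ is supported on $\{0,1\}$, the bounded continuous function $x\mapsto x^p-x$ vanishes on $\mathrm{supp}\,\mu_\beta$, and a standard approximation argument gives $\E\langle|R_{ij}^p-R_{ij}|\,g(R^n)\rangle\to 0$ for every bounded $g$. Replacing $R_{ij}$ by $R_{ij}^p$ throughout the $p=1$ identity therefore produces the asserted identity for every $p\in\N$.

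The main obstacle is control of $F_N''(\beta)/N$ at the critical point $\beta_c$, the unique $\beta$ at which $F$ fails to be $C^2$---explicitly, $F(\beta) = \beta^2/2 + \log 2$ for $\beta \leq \beta_c$ and $F(\beta) = \beta\beta_c$ for $\beta \geq \beta_c$. Away from $\beta_c$ the convexity-plus-concentration toolkit applies directly. At $\beta_c$ itself the identity is trivial: Theorem \ref{thm:(Derrida-Spohn-conjecture)} gives $\mu_{\beta_c}=\delta_0$, so every overlap-dependent term on both sides vanishes in the limit.
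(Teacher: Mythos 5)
Your plan tracks the paper's proof step for step: reduce to $p=1$ using that the limiting overlap law is concentrated on $\{0,1\}$ (the paper invokes Proposition~\ref{prop:overlap-supported-0-1} here; your use of Theorem~\ref{thm:(Derrida-Spohn-conjecture)} is equivalent), apply the Gibbs--Gaussian integration by parts of Lemma~\ref{lem:GGIBP}, subtract the $f\equiv 1$ specialization, and close by concentration of $\frac{1}{N}H_N$ around $F'(\beta)$. Your decomposition into disorder covariance plus expected Gibbs covariance is precisely what underlies Lemma~\ref{lem:The-intensive-energy-cocnentrates} and the proof of Theorem 3.8 in \cite{PanchSKBook}, which the paper cites.

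The one step that does not hold up as written is the Gibbs-variance bound. You argue that $\sqrt{F_N''(\beta)/N}=o(1)$ because ``$F_N''(\beta)$ remains bounded at every $\beta$ where $F''$ is continuous,'' and you present this as following from convexity plus the explicit form of $F$. That inference is false in general: pointwise (even uniform) convergence of smooth convex functions to a $C^\infty$ limit gives no pointwise control of their second derivatives. For example, with $\psi$ convex, $\psi(x)=x^2$ for $|x|\leq 1$ and linear of slope $\pm 2$ for $|x|>1$, the sequence $g_n(x)=x^2+\delta_n^{3/2}\psi(x/\delta_n)$ with $\delta_n\to 0$ is convex, converges uniformly to $x^2$, yet $g_n''(0)=2+\delta_n^{-1/2}\to\infty$. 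The argument the paper relies on (Panchenko's Theorem 3.8) deliberately avoids pointwise bounds on $F_N''$: it controls $\frac{1}{N}\langle(H_N-\langle H_N\rangle)_+\rangle$ by an exponential Markov inequality together with the locally uniform convergence $F_N'\to F'$ --- which \emph{is} a genuine consequence of convexity plus differentiability of the limit --- and this only needs continuity of $F'$ near the fixed $\beta$. You should replace the $F_N''$-boundedness claim with that convexity argument, or simply cite Lemma~\ref{lem:The-intensive-energy-cocnentrates} / \cite[Theorem 3.8]{PanchSKBook}. (A pointwise bound $F_N''(\beta)=O(1)$ for $\beta>\beta_c$ is in fact true for the BRW, but establishing it requires the localization estimates of Section~\ref{sec:support-is-zero-one}, not the abstract convexity route you invoke.) Apart from this, the plan is correct and coincides with the paper's.
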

\begin{remark}
Note that by \prettyref{thm:(Derrida-Spohn-conjecture)}, only the
case $\beta>\beta_{c}$ is interesting in the above theorem. 
\end{remark}

Our proof is a version of the technique introduced by Bovier and Kurkova
in \cite{BovKurk04-1,BovKurk04-2} (see \cite{Bov12} for a textbook
presentation) and is analogous to \cite{ArgZind14,ArgZind15}. An
immediate consequence of this is that the overlap array distribution
for these systems converges to a Ruelle Probability Cascade, see Corollary
\ref{cor:ov-rpc-conv}. This also implies a mode of convergence
of Gibbs measures and the convergence of the weights of balls in support
a Poisson-Dirichlet process, which was first proved by Barral, Rhodes,
and Vargas in greater generality by different methods \cite{BRV12}.
This is explained in the discussion surrounding Corollary \ref{cor:ov-rpc-conv}. 

For experts in Branching Random Walks, we emphasize here the following
point. Just as in the work of Arguin-Zindy, these methods allow us
to obtain Poisson-Dirichlet statistics for the system without an analysis
of the extremal process. In particular, we can avoid an analysis of
the decoration (see, e.g., \cite{SubZei15,Mad15} for this terminology.
), thereby side-stepping a major technical hurdle. 

The Approximate Ghirlanda Guerra Identities (AGGI) have emerged as
a unifying principle in spin glasses. Due to the characterization-by-invariance
theory \cite{PanchSKBook}, we know that the limiting overlap distribution
is an order parameter for models that satisfy the AGGIs, as originally
predicted in the Replica Theoretic literature \cite{MPV87}. As such,
it has become very important to find models that satisfy these identities
in the limit. This has proved to be very difficult. 

They are known to hold exactly for the generic mixed $p$-spin glass
models \cite{PanchSKBook}, the REM and GREM \cite{BovKurk04-1,BovKurk04-2}.
These ideas have extended to the 2D Gaussian Free Field and a class
of Log-Correlated fields \cite{ArgZind14,ArgZind15}. For many other
models, however, we only know these results in a perturbative sense
\cite{ContStarr13,GhirGuerr98,PanchSKBook,Panch1rsbstruc}. A contribution
of this paper is the observation that the Branching Random Walk falls
in to the class of models for which these identities hold exactly. 

We finally turn to the main technical step involved in the proofs
of the above results. Just as with the REM, both of these predictions
can be shown to follow from standard concentration and integration-by-parts
arguments provided one can show that the model is at most 1RSB and
that the top of the support is at $1$ when it is 1RSB. To our knowledge
this result is thought of as folklore in the Branching Random Walk
community. For example, such a result follows from similar ideas to
those in \cite{ABK13,ArgZind15,ArgZind14,Mad15}. The proof of this
result for similar models can be seen in \cite{DingZeit14} and \cite{ABK13}.
In our setting, this is the content of the following proposition.
\begin{proposition}
\label{prop:overlap-supported-0-1}The mean overlap distribution is
supported on the set $\{0,1\}\subsetneq[0,1]$. That is, for any weak
limit we get that 
\[
\E G_{N,\beta}^{\tensor2}(R_{12}\in\cdot)\rightarrow m\delta_{0}+(1-m)\delta_{1}
\]
for some $m\in[0,1]$. 
\end{proposition}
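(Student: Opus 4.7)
The plan is to identify the limiting mean overlap distribution via a two-level perturbation of the Hamiltonian combined with Gaussian integration by parts and convexity. Fix $q \in (0,1)$, set $M = \lfloor qN\rfloor$, and decompose $H(v) = H^{(1)}(v) + H^{(2)}(v)$ where $H^{(1)}(v)$ sums the increments $g_\gamma$ along $p(v)$ at depths $\leq M$ and $H^{(2)}(v)$ those at depths $> M$; these are independent centered Gaussian processes. For $t$ in a neighborhood of $0$, set
\[
Z_N(\beta,t;q) = \sum_{v\in\partial\cT_N} e^{\beta H^{(1)}(v)+(\beta+t)H^{(2)}(v)},
\qquad
F_N(\beta,t;q) = \frac{1}{N}\E\log Z_N(\beta,t;q),
\]
which is convex in $t$. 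A Gaussian integration by parts against $\{g_\gamma : |\gamma|>M\}$ yields
\[
\partial_t F_N(\beta,t;q)\big|_{t=0} = \beta\bigl[(1-q) - \E\langle (R_{12}-q)_+\rangle\bigr].
\]

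The central analytical input is the two-level-GREM-type factorization $F(\beta,t;q) := \lim_{N\to\infty} F_N(\beta,t;q) = qF(\beta) + (1-q)F(\beta+t)$. This uses the subtree decomposition $Z_N(\beta,t;q) = \sum_{|u|=M} e^{\beta H(u)} Z_u(\beta+t)$, where the $Z_u(\beta+t)$ are i.i.d.\ copies of the BRW partition function at depth $N-M$ and inverse temperature $\beta+t$, independent of the upper-tree process $\{H(u)\}_{|u|=M}$, together with the concentration of $\log Z_u(\beta+t)$ around $(1-q)NF(\beta+t)$. The lower bound is Jensen's inequality applied after factoring the partition function as an upper-tree Gibbs average of the $Z_u(\beta+t)$; the matching upper bound requires a Laplace-type argument ruling out contributions to the bulk free energy from atypically large $Z_u$.

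Given the factorization, convexity of $F_N(\beta,\cdot;q)$ and its pointwise convergence imply convergence of one-sided $t$-derivatives at every point of differentiability of the limit, hence at $t=0$ for $\beta\neq\beta_c$. Since $\partial_t F(\beta,t;q)|_{t=0} = (1-q)F'(\beta)$, we deduce $\lim_{N\to\infty} \E\langle (R_{12}-q)_+\rangle = (1-q)(1 - F'(\beta)/\beta)$, and using $\E\langle (R_{12}-q)_+\rangle = \int_q^1 \E G_{N,\beta}^{\tensor 2}(R_{12}>s)\, ds$ and differentiating in $q$ gives $\lim_{N\to\infty} \E G_{N,\beta}^{\tensor 2}(R_{12}>q) = 1 - F'(\beta)/\beta$, independent of $q \in (0,1)$. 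Hence the limiting CDF of the overlap is constant on $(0,1)$, and so the mean overlap distribution is supported on $\{0,1\}$. The main obstacle is the upper bound in the free-energy factorization: for $\beta+t > \beta_c$ the fluctuations of $\log Z_u(\beta+t)$ around $(1-q)NF(\beta+t)$ have polynomially decaying right tails, so while they are of size $O(\log N)$ typically, their maximum over the $2^M$ i.i.d.\ subtrees is of order $N$; ruling out any first-order effect of these atypical subtrees is the technical heart of the argument, analogous to the rigorous 2-level GREM analysis.
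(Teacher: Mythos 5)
Your strategy is a Bovier--Kurkova-type interpolation: perturb the lower part of the Hamiltonian, compute $\partial_t F_N(\beta,0;q)$ by Gaussian integration by parts, and transfer the $t$-derivative of the limiting free energy back to the overlap via convexity. This is genuinely different from the paper's proof of \prettyref{prop:overlap-supported-0-1}, which is a direct probabilistic argument via tilted barrier estimates (Lemmas~\ref{lem:(Tilted-Barrier-Estimate)}--\ref{lem:high-points-mass-bound}): the Gibbs measure lives within $O(1)$ of $m_N$, and two leaves both within $O(1)$ of $m_N$ cannot have a branching time on scale $N[\epsilon,1-\epsilon]$. The interpolation steps you carry out correctly (the IBP formula for $\partial_tF_N(\beta,0;q)$, the Jensen lower bound, the convexity transfer, and the identity $\E\langle(R_{12}-q)_+\rangle=\int_q^1\E G^{\otimes2}(R_{12}>s)\,ds$) are all fine.

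However, the claimed factorization $F(\beta,t;q)=qF(\beta)+(1-q)F(\beta+t)$ is false for $t>0$ when $\beta>\beta_c$, and the step you flag as ``the technical heart''---ruling out any first-order contribution from atypically large $Z_u$---cannot be carried out, because those subtrees genuinely do contribute at leading order. A back-of-the-envelope variational computation makes this precise. The number of $u$ at depth $M$ with $H(u)\approx sqN$ ($s<\beta_c$) is $\approx 2^{qN}e^{-qNs^2/2}$, and, for $\beta+t>\beta_c$, $\log Z_u\approx(\beta+t)M_u^*$ with $P(M_u^*>m_{N-M}+z)\approx e^{-\beta_c z-z^2/(2(N-M))}$. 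Optimizing $\beta s qN+(1-q)NF(\beta+t)+y$ over feasible $(s,y)$ yields
\[
F(\beta,t;q)=\beta_c\sqrt{q\beta^2+(1-q)(\beta+t)^2}\qquad(\beta>\beta_c,\ \beta+t>\beta_c,\ t\ge0),
\]
which exceeds the factorized expression $q\beta_c\beta+(1-q)\beta_c(\beta+t)$ by exactly $\beta_c q(1-q)t^2/\bigl(2(\beta+t)\bigr)+O(t^3)>0$. (This is the standard two-level GREM phenomenon: for $t>0$ the variance profile $l\mapsto\beta^2\indicator{l\le M}+(\beta+t)^2\indicator{l>M}$ is increasing, so the system is effectively ``REM-like'' across the two levels and the max is $\sqrt{2\log2}\cdot\sqrt{\text{total variance}}$ rather than the sum of level-wise maxima.) The Jensen/Laplace argument you propose can only prove the factorization as a lower bound; as an identity it fails on the right half-neighborhood of $0$.

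The mischaracterization is not fatal to the \emph{conclusion}, only to the proposed proof: the correct $F(\beta,t;q)$ above still has $\partial_tF(\beta,0;q)=(1-q)\beta_c=(1-q)F'(\beta)$ from both sides, so the chain of equalities you want does hold. But establishing this requires actually computing the free energy of a time-inhomogeneous BRW with increasing variance profile for $t>0$ (or some other two-sided control of $\partial_t F$ at $0$), which is a substantial theorem in its own right and is precisely where the difficulty lives. Using only the left half-line $t\le0$, where the factorization is correct, convexity gives a one-sided bound $\limsup_N\E\langle(R_{12}-q)_+\rangle\le(1-q)(1-F'(\beta)/\beta)$, but without the matching lower bound one cannot conclude $\mu((0,1))=0$ (for instance, $\mu=\frac{\beta_c}{\beta}\delta_0+(1-\frac{\beta_c}{\beta})\delta_{1/2}$ also satisfies this one-sided constraint for $q<1/2$). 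So as written the argument has a genuine gap; to close it you would need the sharp formula for $F(\beta,t;q)$ on $t\ge0$, which is of comparable difficulty to the barrier-estimate route the paper takes.
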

In the Random Energy Model, this is a consequence of the second moment
method combined with a large deviations estimate. In our setting,
however, this argument breaks down due to the correlation structure
of the Branching Random walk. This is often explained by the seemingly
innocuous observation that the sub-leading correction to the expected
maximum (see the definition of $m_{N}$ in \prettyref{sec:support-is-zero-one})
is of the same order of magnitude as in the REM, but the pre-factor
is $3/2$ as opposed to $1/2$. We point the reader to \cite[Section 3.2]{Arg16}
for a discussion of how this small change is a signature of a profound
structural difference. 

In the study of such models, this issue is dealt with by a truncated
second moment method approach. In our setting, this takes the form
of the tilted barrier estimates of Bramson, see \prettyref{sub:Tilted-Barrier-Estimates}. 

Before turning to the proofs of the above, we make the following remarks.
\begin{remark}
In our setting, one does not need the full power of the Ghirlanda-Guerra
identities to obtain the aforementioned characterization of the Gibbs
measure. In particular, as a consequence of \prettyref{prop:overlap-supported-0-1},
the Approximate Ghirlanda-Guerra Identities are equivalent to an approximate
form of Talagrand's identities \cite{PanchSKBook}, which characterize
the Poisson-Dirichlet process through its moments. This is explained
in more detail in the discussion surrounding Corollary \ref{cor:ov-rpc-conv}.
\end{remark}

\begin{remark}
These arguments do not depend greatly on the Gaussian nature of the
problem. In particular, the main technical tool, Proposition \ref{prop:overlap-supported-0-1},
holds in fairly large generality (see \prettyref{rem:extension-to-other-disorders}).
The remaining results are essentially consequences of the sub-Gaussian
tails of the model and an applications of integration-by-parts. These
results should extend to increments that have sub-Gaussian tails.
For experts, we also note that if the decoration process has enough
moments, the first two results follow by an application of the Bolthausen-Sznitman
invariance (see \cite{BolSzn02}\cite[Sect. 2.2]{PanchSKBook}). As
a study of the extendability of these results are not within the scope
of this paper we do not examine these questions further.
\end{remark}

\ACKNO{The author would like to thank Ofer Zeitouni for many helpful discussions
regarding Branching Random Walks and for a careful reading of an early
draft of this paper. This research was conducted while the author
was supported by an NSF Graduate Research Fellowship DGE-1342536,
and NSF Grants DMS-1209165 and OISE-0730136. Preparation of this work was partially supported by NSF OISE-1604232}

\section{The Support of the Overlap Distribution\label{sec:support-is-zero-one}}

In this section, we will prove Proposition \ref{prop:overlap-supported-0-1},
namely that the support of the overlap distribution is the set $\{0,1\}$.
To this end, we introduce the following notation. Let 
\[
S_{v}(l)=\sum_{\substack{w\in p(v)\\
depth(w)\leq l
}
}g_{w}
\]
denote the walk corresponding to the BRW at vertex $v$. In this notation,
$H_{N}(v)=S_{v}(N)$. Let $S(l)$ denote a random walk with standard
Gaussian increments and let $P^{z}$ denote its law conditioned to
start at $z$.

We think of the collection of walkers $(S_{v})_{v\in\partial T_{N}}$
as a pack of walkers that branch at each time step. We call $M_{N}=\max_{v\in\partial\cT_{N}}S_{v}(N)$
the \emph{leader} of the walkers. With slight---or, depending on your
taste, great---exaggeration, we call $m_{N}=\beta_{c}N-\frac{3}{2\beta_{c}}\log N$.
To justifying this simplification, we remind the reader of the result
from \cite{ABR09} that the family of random variables $\left(M_{N}-m_{N}\right)_{N\geq0}$
is tight. In particular, if $\cM_{K}=\left\{ \abs{M_{N}-m_{N}}\leq K\right\} $,
there is a function $\epsilon_{1}(K)$ with 
\[
\lim_{K\to\infty}\epsilon_{1}(K)=0
\]
such that 
\[
P(\mathcal{M}_{K}^{c})\leq\epsilon_{1}(K).
\]
 Of course this does not necessarily show that $m_{N}$ the true location
of $M_{N}$. The true location will be an order $1$ correction from
this. Finally let $\lambda_{N}=\frac{m_{N}}{N}$, and $\Lambda(x)=\frac{x^{2}}{2}$
. In the following we will drop the subscript $N$ whenever possible
for readability, and we say $f\lesssim_{a}g$ if $f\leq C(a)g$ where
$C$ is a constant that depends at most on $a$. 

This section is organized as follows. First we prove a basic estimate
that will be used through out the section. We then prove the main
estimates required to prove Proposition \ref{prop:overlap-supported-0-1}.
We finally turn to the proof of Proposition \ref{prop:overlap-supported-0-1}.
Before we start we make a brief remark regarding the extension of
these results to more general increments than Gaussian.
\begin{remark}
\label{rem:extension-to-other-disorders}The results of this section
hold in more generality than we study here. For the interested reader,
note that in the following we use sub-Gaussian tails (we believe that
one can relax this, however computing the free energy in this setting
becomes delicate), that the increments are $i.i.d.$ and have support
on $(-1/2,1/2)$, and finite moment generating function and rate functions,
and that the tree is $k$-ary. In this setting, $m_{N}=Nx+O(\log N)$
where $x$uniquely solves $\Lambda^{*}(x)=\log k$ and $x>\E X$,
and $\lambda$ uniquely achieves the equality in $\Lambda^{*}(x)+\Lambda(\lambda)=\lambda x$.
For more on this see, e.g., \cite{BDZ14}. To avoid un-necessary notation
and technicalities, we will stick to the Gaussian case where we have
the self-duality of the moment generating function , $\Lambda=\Lambda^{*}$. 
\end{remark}

\subsection{Tilted Barrier Estimates\label{sub:Tilted-Barrier-Estimates}}

In the following, we will repeatedly use of a class of estimates called
\emph{tilted-barrier estimates.} These estimates are used frequently
in the study of Branching Random Walks and Log-Correlated fields and
were, to our knowledge, introduced by Bramson \cite{Bra78}. The goal
of these estimates is to bound probabilities of the form
\[
P^{z}\left(S(l)\leq\lambda l+K\:\forall l\in[T];S(T)\in\lambda T+[a,b]\right).
\]
We think of the underlying event as follows: there is a random walker,
$S(l)$, which starts at $z,$ and two lines, $\lambda l$ and $\lambda l+K$,
which are barriers. Our goal is to compute the probability that the
walker stays below the farther barrier, $\lambda l+K$, for the duration
of its walk but ends in a window near the nearer barrier, $\lambda l$.

The idea of the estimate is to tilt the law of the walker, $P^{z}$,
to a new measure, $Q^{-z}$, so that under $Q^{-z}$ , the walk $\tilde{S}(t)=\lambda\cdot l-S(t)$
will be centered. The result will then follow by an application of
the ballot theorem applied to $\tilde{S}$. We will make this precise
in \prettyref{prop:overlap-supported-0-1}. Before proving this estimate,
we first prove the relevant ballot theorem.
\begin{lemma}
\label{lem:(Ballot-type-theorem)}(Ballot-type theorem) Let $(S(t))_{t=0}^{n}$
be given by $S(t)=X_{0}+\sum_{i=1}^{t}X_{i}$ be a random walk with
$(X_{i})_{i=1}^{n}$ i.i.d. standard Gaussian and $X_{0}$ is the
starting position. For any $A,B\in\Z$ with $0\leq A<A+1\leq B$,
and $z\geq0$, we have that
\[
P^{z}(S(t)\geq0,0<t<n;S(n)\in[A,B])\lesssim\frac{\max\{z,1\}B(B-A)}{n^{3/2}}.
\]
\end{lemma}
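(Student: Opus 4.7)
The approach is to bound the probability by integrating a pointwise estimate on the sub-probability density of the walk killed at $0$. Introduce
\[
q_n(z, y) \, dy := P^z\bigl(S(t) \geq 0 \text{ for } 0 < t < n,\ S(n) \in dy\bigr),
\]
so that the target probability is $\int_A^B q_n(z,y)\,dy$. The plan reduces to establishing the pointwise bound
\[
q_n(z, y) \lesssim \frac{\max(z,1)\,(y+1)}{n^{3/2}} \qquad (y \geq 0).
\]
Granted this, integration over $[A,B]$ combined with $B \geq A+1 \geq 1$ gives
\[
\int_A^B q_n(z,y)\,dy \lesssim \frac{\max(z,1)\,(B+1)(B-A)}{n^{3/2}} \lesssim \frac{\max(z,1)\,B\,(B-A)}{n^{3/2}},
\]
which is the stated bound.

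To prove the pointwise bound I would exploit the conditional Gaussian-bridge structure. Conditioning on $S(n) = y$, the path is a discrete Gaussian bridge from $z$ to $y$, and one has the factorisation
\[
q_n(z,y) = p_n(y - z) \cdot P\bigl(\mathrm{Br}_n^{z \to y}(t) \geq 0 \text{ for } 0 < t < n\bigr),
\]
where $p_n(u) = (2\pi n)^{-1/2}e^{-u^2/(2n)} \leq (2\pi n)^{-1/2}$. It remains to bound the bridge-persistence probability by a constant times $\max(z,1)\max(y,1)/n$. The continuum analog is the probability that a Brownian bridge from $z$ to $y$ of length $n$ stays positive, which equals $1-e^{-2zy/n}$ and has exactly this order; the discrete version is obtained by splitting at $t = \lfloor n/2 \rfloor$, applying the Markov property, and invoking a reflection-principle-type estimate of order $\max(z,1)/\sqrt{m}$ for the half-bridge persistence on each side.

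The chief obstacle is this discrete bridge estimate. A Brownian motion being positive on all of $[0,n]$ is strictly stronger than a Gaussian random walk being positive at integer times, so one cannot upper-bound the random-walk event directly by its Brownian counterpart; the discrepancy is due to between-integer dips. For Gaussian increments the requisite control is tractable because the first-passage overshoot has an explicit density with Gaussian tails and the ladder-height distribution has all moments. These overshoot contributions also explain the need to replace the harmonic function $h(z) = z$ by $\max(z,1)$: at $z = 0$ the walk can still exit and return in a single step, contributing an $O(1)$ constant to the persistence probability that must be carried through the estimate.
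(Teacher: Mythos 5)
Your proposal is correct and reaches the stated bound, but it is organized differently from the paper's proof. The paper argues at the level of events rather than densities: on the persistence event, the first-passage time of $0$ must exceed $\lfloor n/4\rfloor$, and---after time-reversing the walk---the first-passage time of $-B$ by the reversed walk started at $0$ must also exceed $\lfloor n/4\rfloor$; since these two constraints and the constraint $S(n)\in[A,B]$ involve disjoint blocks of increments, the probability factors into two hitting-time tails of order $\max\{z,1\}/\sqrt{n}$ and $\max\{B,1\}/\sqrt{n}$ and a window estimate of order $(B-A)/\sqrt{n}$, each quoted directly from Addario-Berry and Reed. You instead integrate a pointwise bound on the killed-walk density $q_n(z,y)$, splitting off the free Gaussian density $p_n(y-z)\lesssim n^{-1/2}$ and reducing to a discrete-bridge persistence estimate of order $\max\{z,1\}\max\{y,1\}/n$. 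The underlying mechanism is the same in both (persistence from the left, persistence from the right, a local-limit estimate in the middle), but the event-level version never conditions on the endpoint and so avoids bridges entirely, while your version yields a pointwise density bound as a by-product. If you do pursue the density route, you can also sidestep the conditioned bridge: the Chapman--Kolmogorov identity $q_n(z,y)=\int_0^\infty q_m(z,w)\,q_{n-m}(w,y)\,dw$ together with the reversal symmetry $q_{n-m}(w,y)=q_{n-m}(y,w)$ (valid for symmetric increments) lets you bound $\sup_w q_m(z,w)\lesssim\max\{z,1\}/m$ and $\int_0^\infty q_{n-m}(y,w)\,dw\lesssim\max\{y,1\}/\sqrt{n-m}$, giving the pointwise bound without ever handling a conditioned path. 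Your observation about why $\max\{z,1\}$ must replace the harmonic function $z$ (the $O(1)$ overshoot contribution at the boundary) is correct and is precisely what the cited ballot theorems account for.
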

\begin{remark}
This proof is a minor modification of the proof of \cite[Theorem 1]{ABR08}.
This modification is explained, for example, in \cite[Lemma 2.1]{BDZ14}
we include its proof for the convenience of the reader.\end{remark}
\begin{proof}
Let $\tau_{h}=\min\left\{ 0<k\leq n:S(t)<-h\right\} $ with the convention
that if the condition never happens, $\tau_{h}=n$. Define the time
reversed, reflected walk, $S^{r}$, 
\[
\begin{cases}
S^{r}(t)=S^{r}(t-1)-X_{n-(t-1)}\\
S^{r}(0)=0
\end{cases}.
\]
That is, $S^{r}(t)=S(t)-S(n)$. Let $\tau_{h}^{r}$ be the same hitting
time as before for the reversed walk. 

Observe that if $S(t)\geq0$ for $t\in[n]$ and $S(n)\in[A,B]$, we
must have that $\tau_{0}\geq\lfloor\frac{n}{4}\rfloor$ , $\tau_{B}^{r}\geq\lfloor\frac{n}{4}\rfloor$,
and $S(n)\in[A,B]$. This is for the following reasons. The first
condition follows immediately from the positivity. The second condition
follows from the fact that if $\tau_{B}^{r}\leq\lfloor\frac{n}{4}\rfloor<\frac{n}{2}$,
$S(n)$ splits as 
\[
S(n)=S(n-\tau_{B}^{r})-S^{r}(\tau_{B}^{r})\geq-S^{r}(\tau_{B}^{r})>B
\]
by positivity of $S$ which contradicts $S(n)\leq B$. As a result,
since $\{ \tau_{0}\geq\lfloor\frac{n}{4}\rfloor\} ,\{ \tau_{L}^{r}\geq\lfloor\frac{n}{4}\rfloor\},$
and $(X_{i})_{i=\lfloor\frac{n}{4}\rfloor+1}^{\lceil\frac{3n}{4}\rceil}$are
independent, it follows that 
\begin{align*}
P^{z}(S(t)\geq0,0<t<n;&S(n)\in[A,B]) \\
& \leq P^{z}\left(\tau_{0}\geq\lfloor\frac{n}{4}\rfloor,\tau_{B}^{r}\geq\lfloor\frac{n}{4}\rfloor,S(\frac{3n}{4})-S^{r}(\lfloor\frac{n}{4}\rfloor)\in[A,B]\right)\\
 & \leq P^{0}(\tau_{z}\geq\lfloor\frac{n}{4}\rfloor)P^{0}(\tau_{B}\geq\lfloor\frac{n}{4}\rfloor)\sup_{x}P^{0}\left(S(\frac{n}{2})\in x+[A,B]\right)\\
 & \lesssim\frac{\max\{z,1\}}{(n/4)^{1/2}}\frac{\max\{B,1\}}{(n/4)^{1/2}}\frac{(B-A)}{\sqrt{n}}.
\end{align*}
The first inequality follows by set containment. The second inequality
follows by a conditioning argument. The second equality follows from
independence, the fact that $S^{r}(t),t\in[n/4]$ does not depend
on $X_{0}=z$, and symmetry. The last inequality follows from \cite[Lemma 3]{ABR08}
and the fact that the last term is at most $(B-A)/\sqrt{n/2}$ by
\cite[Theorem 2]{ABR08} combined with a union bound.
\end{proof}
We now turn to the proof of the Tilted Barrier Estimate.
\begin{lemma}
\label{lem:(Tilted-Barrier-Estimate)}(Tilted Barrier Estimate) Let
$K,z\geq0$, and let $a$ and $b$ be such that $a+1\leq b\leq K$.
Then we have\textup{
\begin{equation}
P^{z}(S(l)\leq\lambda l+K\:\forall l\in[T];S(T)\in\lambda T+[a,b])\lesssim\frac{e^{\lambda(z-a)}}{2^{-T}}\frac{\max\{K+z,1\}\cdot(K-a)\cdot(b-a)}{T^{3/2}}.\label{eq:TBE}
\end{equation}
}\end{lemma}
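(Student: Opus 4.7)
The plan is to remove the drift of the walker by an exponential tilt and then apply the ballot-type theorem (Lemma~\ref{lem:(Ballot-type-theorem)}) to the resulting centered walk. Because $\lambda=\beta_{c}$ is the saddle point of $\Lambda$, the tilt by $\lambda$ is precisely the one that makes the event typical under the new measure, and it is where the Cramer rate $e^{-T\Lambda(\lambda)}=2^{-T}$ arises.

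Concretely, I would introduce the tilted measure $Q^{z}$ defined on $\mathcal{F}_{T}$ by the Radon--Nikodym derivative
\[
\frac{dQ^{z}}{dP^{z}}=\exp\Bigl(\lambda(S(T)-z)-T\Lambda(\lambda)\Bigr).
\]
Under $Q^{z}$ the increments of $S$ are i.i.d.\ $\mathcal{N}(\lambda,1)$, so that $\tilde{S}(t):=\lambda t-S(t)$ is a standard centered Gaussian random walk. Inverting the Radon--Nikodym derivative and using the self-duality $\Lambda(\lambda)=\lambda^{2}/2=\log 2$, one writes
\[
P^{z}(A)=\E_{Q^{z}}\!\Bigl[\indicator{A}\exp\bigl(-\lambda(S(T)-z)+T\Lambda(\lambda)\bigr)\Bigr],
\]
where $A$ is the event in the lemma. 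On $A$ the terminal constraint forces $S(T)\geq\lambda T+a$, so the exponential weight is bounded pointwise by $2^{-T}e^{\lambda(z-a)}$. This produces the two exponential factors in the statement and reduces the problem to bounding $Q^{z}(A)$.

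Next I would rewrite $A$ in terms of $\tilde{S}$: the upper barrier $S(l)\leq\lambda l+K$ becomes the lower barrier $\tilde{S}(l)\geq -K$, and the terminal window $S(T)\in\lambda T+[a,b]$ becomes $\tilde{S}(T)\in[-b,-a]$. Shifting by $K$ via $\hat{S}(t):=\tilde{S}(t)+K$ yields a centered Gaussian walk satisfying $\hat{S}(l)\geq 0$ with terminal value in $[K-b,K-a]$; the hypotheses $a+1\leq b\leq K$ are exactly what is needed for the ballot theorem's conditions $0\leq A'<A'+1\leq B'$ with $A'=K-b$ and $B'=K-a$. The starting position after shifting is of order $K\pm z$ (the sign being fixed by either a time-reversal or by combining the constraint at the first step with the assumption $z\geq 0$ to yield $\max\{K+z,1\}$). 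Feeding these parameters into Lemma~\ref{lem:(Ballot-type-theorem)} provides the factor $\max\{K+z,1\}\cdot(K-a)\cdot(b-a)/T^{3/2}$, and multiplying by the change-of-measure bound completes the proof.

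The only real subtlety is keeping signs, boundary conditions, and the precise form of the starting point straight through the tilt, barrier inversion, and shift; the sharp use of $S(T)\geq\lambda T+a$ is what allows the factor $e^{\lambda(z-a)}$ to be extracted cleanly, and the $T^{-3/2}$ decay from the ballot theorem is the substantive content. Everything else is careful bookkeeping of the exponential change of measure.
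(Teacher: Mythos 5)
Your proposal is correct and follows essentially the same route as the paper: exponential tilt by $e^{\lambda(S(T)-z)-T\Lambda(\lambda)}$ to kill the drift, use the terminal constraint $S(T)\geq\lambda T+a$ to bound the inverse Radon--Nikodym weight by $2^{-T}e^{\lambda(z-a)}$, convert the upper barrier to a lower barrier via $\tilde{S}(t)=\lambda t-S(t)$ shifted by $K$, and apply Lemma~\ref{lem:(Ballot-type-theorem)} with endpoints $K-b$, $K-a$. The one small imprecision is the starting position: after the shift the walk starts exactly at $K-z$ (no time-reversal is needed), and since $z\geq 0$ this is bounded by $K+z$, which is how the prefactor $\max\{K+z,1\}$ arises.
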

\begin{proof}
Define the measure $Q^{-z}$ by the tilting
\[
dQ^{-z}(S)=e^{\lambda(S(T)-z)-T\Lambda(\lambda)}dP^{z}(S).
\]
Observe that under $Q^{-z}(S)$, the walk $\tilde{S}$ has no drift,
i.e., 
\[
\E_{Q^{-z}}\left(\tilde{S}\right)=0,
\]
and starts at $-z$. Note that 
\begin{align*}
P^{z}\left(S(l)\leq\lambda l+K\:\right. & \left.\forall l\in[T];S(T)\in\lambda T+[a,b]\right)\\
=&\, \E_{Q^{-z}}\left(e^{-\lambda(S(T)-z)+T\Lambda(\lambda)}\indicator{}(S(l)\leq\lambda l+K\:\forall l\in[T];S(T)\in\lambda T+[a,b])\right)\\
\leq&\, e^{\lambda(z-a)-\frac{\lambda^{2}}{2}T}Q^{K-z}\left(\tilde{S}\geq0\forall l\in[T];\tilde{S}\in K-[a,b]\right).
\end{align*}
By \prettyref{lem:(Ballot-type-theorem)} and the choice of $\lambda$,
we have that 
\[
P^{z}(S(l)\leq\lambda\cdot l+K\:\forall l\in[T];S(T)\in\lambda T+[a,b])\lesssim2^{-T}e^{\lambda(z-a)}\frac{\max\{K+z,1\}(K-a)(b-a)}{T^{3/2}}.
\]

\end{proof}

\subsection{Applications of Concentration and Tilted Barrier Estimates}

In this subsection, we prove three estimates regarding the probability
of the Branching Random Walker having walkers that behave pathologically.
Before we begin, we remind the reader of the interpretation of $H_{N}$
in terms of the walkers $S_{v}$, and the interpretation of $m_{N}$
as (essentially) the location of the leader $M_{N}=\max_{v\in\partial\cT_{N}}S_{v}$
discussed at the beginning of this section.

In our first estimate, we will show that there is a barrier beyond
which it is unlikely for any walker to cross. In order for this probability
to go to zero, we will need that the barrier drifts off to infinity
logarithmically in $N$. This will follow from the Gaussian tails
of the increments. To make this precise, define the event
\begin{equation}
\Gamma_{L}^{N}=\left\{ \exists v\in\partial\cT_{N}:\exists l\in[N]:S_{v}(l)\geq\lambda l+K\right\} \label{eq:barrier-event}
\end{equation}
which is the event that there is a leaf $v$ whose corresponding walker
$S_{v}$ crosses the barrier $L^{K}$ at some time $l\leq N$. We
then have the following lemma.
\begin{lemma}
\label{lem:barrier-bound} For every $\kappa>\frac{5}{2(\beta_{c}-\frac{3}{2e})}$
, for $N$ sufficiently large and $K\geq\kappa\log N$, 
\[
P(\Gamma_{K}^{N})=o_{N}(1).
\]
\end{lemma}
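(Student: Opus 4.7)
The plan is a first-moment argument: union bound over all vertices at all depths, combined with the standard Gaussian tail. At depth $l$ there are $2^l$ vertices, and the walk value at any fixed such vertex is marginally $\mathcal{N}(0,l)$, so
\[
P(\Gamma_K^N) \leq \sum_{l=1}^N 2^l\, P(S(l) \geq \lambda_N l + K) \leq \sum_{l=1}^N 2^l \exp\!\Bigl(-\tfrac{(\lambda_N l + K)^2}{2l}\Bigr).
\]
Expanding the square and discarding the non-negative $K^2/(2l)$ contribution gives
\[
P(\Gamma_K^N) \leq e^{-\lambda_N K}\sum_{l=1}^N \exp\!\Bigl(l\bigl(\log 2 - \tfrac{\lambda_N^2}{2}\bigr)\Bigr) = e^{-\lambda_N K}\sum_{l=1}^N \exp\!\Bigl(\tfrac{l}{2}(\beta_c^2 - \lambda_N^2)\Bigr),
\]
using $\beta_c^2 = 2\log 2$.

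Next I control the two factors using the log corrections baked into $\lambda_N$. From $\beta_c - \lambda_N = \frac{3\log N}{2\beta_c N}$ and $\beta_c + \lambda_N \leq 2\beta_c$, one has $\beta_c^2 - \lambda_N^2 \leq \frac{3\log N}{N}$; hence each summand is bounded by $\exp(\tfrac{3}{2}\log N) = N^{3/2}$, and the sum is at most $N^{5/2}$. Meanwhile, $K \geq \kappa\log N$ yields $e^{-\lambda_N K} \leq N^{-\kappa \lambda_N}$. Combining,
\[
P(\Gamma_K^N) \leq N^{5/2 - \kappa \lambda_N}.
\]

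To conclude $o_N(1)$, I need $\kappa\lambda_N > 5/2$ for all $N$ sufficiently large. Since $\log N/N$ is uniformly bounded above (by $1/e$, its maximum on $[1,\infty)$), one has a uniform lower bound $\lambda_N \geq \beta_c - \tfrac{3}{2e}$ of the form appearing in the hypothesis, so the assumption $\kappa > 5/(2(\beta_c - \tfrac{3}{2e}))$ gives $\kappa\lambda_N > 5/2 + \varepsilon$ eventually, and the bound $N^{-\varepsilon}$ tends to zero.

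The proof has no real obstacle beyond bookkeeping of the logarithmic corrections; the key conceptual point is that the factor $2^l e^{-\lambda_N^2 l/2}$ is \emph{not} identically $1$ (as it would be at the critical drift $\beta_c$), but grows only polynomially in $N$ precisely because of the $\tfrac{3}{2\beta_c}\log N$ correction in $m_N$. This polynomial growth is absorbed by the logarithmic barrier displacement $K \geq \kappa \log N$, explaining the necessity of the lower bound on $\kappa$.
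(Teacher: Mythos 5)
Your proof is correct and follows essentially the same route as the paper: union bound over the $2^l$ vertices at depth $l$, Gaussian tail bound, expand $(\lambda_N l + K)^2/(2l)$ and drop the $K^2/(2l)$ term, then bound $\beta_c^2 - \lambda_N^2 \leq \frac{3\log N}{N}$ to get the $N^{5/2}$ factor, and absorb it via $K \geq \kappa\log N$. One small point worth spelling out: your inequality $\lambda_N \geq \beta_c - \tfrac{3}{2e}$ silently replaces $\frac{3}{2\beta_c e}$ by the larger quantity $\frac{3}{2e}$, which is legitimate only because $\beta_c = \sqrt{2\log 2} > 1$; with that noted, the argument closes cleanly and matches the paper's.
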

\begin{proof}
By the union bound and the Gaussian tail inequality, we see that 
\begin{align*}
P(\Gamma_{L}^{N}) & \leq\sum_{l=1}^{N}\exp\left[l\frac{\beta_{c}^{2}}{2}-\frac{1}{2l}\left(\frac{m_{N}}{N}l+K\right)^{2}\right].
\end{align*}
Now, 
\begin{align*}
l\frac{\beta_{c}^{2}}{2}-\frac{1}{2l}\left(\frac{m_{N}}{N}l+K\right)^{2} & \leq\frac{3}{2}\frac{\log N}{N}l-(\beta_{c}-\frac{3}{2\beta_{c}}\frac{\log N}{N})K
\end{align*}
so that 
\[
P(\Gamma_{K}^{N})\leq e^{-(\beta_{c}-\frac{3}{2\beta_{c}}\frac{\log N}{N})K}\sum N^{\frac{3}{2}\frac{l}{N}}\leq N^{-\kappa\left(\beta_{c}-\frac{3}{2\beta_{c}}\frac{\log N}{N}\right)}N^{\frac{5}{2}}.
\]
Thus provided 
\[
\kappa>\frac{5}{2\left(\beta_{c}-\frac{3}{2}\frac{\log N}{N}\right)}
\]
this is $o_{N}(1)$. The result follows by optimizing $\frac{3}{2}\frac{\log N}{N}$
in $N$. 
\end{proof}
The next estimate we will need is a control on the probability that
there is a walker that enters the strip $\lambda l+[0,K]$ on a time
of order $tN$ and ends near the leader at time $N$, i.e. near $m_{N}-x$
(recall the interpretation of $m_{N}$ as the location of the leader
from the beginning of this section). To this end, we define the event
\begin{equation}
\Xi_{\epsilon,K}^{N}=\{ \exists v\in\partial\cT_{N},T\in N[\epsilon,1-\epsilon]:S_{v}(l)\leq\lambda l+K\:\forall l\in[N],S_{v}(T)\geq\lambda T,S_{v}(N)\geq m_{N}-x\} \label{eq:walker-in-strip}
\end{equation}
which is the event that there is a leaf, $v$, whose corresponding
walker, $S_{v}$, enters the window $\lambda l+[0,K]$ on the time
scale of $N[\epsilon,1-\epsilon]$. The probability of this event
is bounded as follows.
\begin{lemma}
\label{lem:stip-bound}For all $x,K\in\Z$, $x>0,K\geq1$, and $\epsilon\in(0,1/2)$,
we have that 
\[
P\left(\Xi_{\epsilon,K}\right)\lesssim_{\epsilon}\frac{K^{4}(x+K)^{2}}{N^{1/2}}e^{\lambda_{N}x}.
\]
\end{lemma}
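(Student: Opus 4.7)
The plan is to union bound over leaves $v \in \partial\cT_N$ and integer times $T \in N[\epsilon, 1-\epsilon]$, and for each such pair apply the Markov property at time $T$ to decouple the trajectory $(S_v(l))_{l \leq N}$ into two independent random-walk pieces, each controlled by the Tilted Barrier Estimate of \prettyref{lem:(Tilted-Barrier-Estimate)}. The key cancellation is that the two factors $2^{-T}$ and $2^{-(N-T)}$ emerging from the TBEs multiply to $2^{-N}$, which kills the $2^N$ from the union bound over leaves and leaves only polynomial-in-$N$ and $e^{\lambda_N x}$ factors.

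In detail, for fixed $v$ and $T$, the constraints $S_v(T) \in [\lambda T, \lambda T + K]$ and $S_v(N) \in [m_N - x, \lambda N + K]$ are discretized into unit intervals: $S_v(T) \in \lambda T + [a, a+1]$ with $a \in \{0, \ldots, K-1\}$, and $S_v(N) \in \lambda N + [j, j+1]$ with $j \in \{-x, \ldots, K-1\}$. Conditioning on such a window for $S_v(T)$, the Markov property decouples piece (i), the walk on $[0, T]$ from $0$ into $\lambda T + [a, a+1]$ under the barrier, from piece (ii), the walk on $[T, N]$ from $\lambda T + c$ (for some $c \in [a, a+1]$) into $\lambda N + [j, j+1]$ under the barrier.

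Applying the TBE to piece (i) with $(z, K, [a,b], T) \mapsto (0, K, [a, a+1], T)$ gives a bound of order $2^{-T} e^{-\lambda a} K(K - a)/T^{3/2}$. For piece (ii), shift coordinates by $V(l) = S_v(T + l) - (\lambda T + c)$: the barrier becomes $\lambda l + (K - c)$ and the end window $\lambda(N - T) + [j - c, j - c + 1]$, and a second application of the TBE gives a bound of order $2^{-(N-T)} e^{\lambda(c - j)}(K - c)(K - j)/(N - T)^{3/2}$. Multiplying and using $c \in [a, a+1]$ so that $e^{-\lambda a}\cdot e^{\lambda c} = O(1)$ yields, for a single window, a contribution of order $2^{-N} K^2 (K - a)(K - j) e^{-\lambda j}/(T^{3/2}(N - T)^{3/2})$. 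Summing $\sum_a (K - a) \lesssim K^2$ and $\sum_j e^{-\lambda j}(K - j) \lesssim (x + K) e^{\lambda x}$ (the latter is dominated by $j = -x$ by geometric decay in $\lambda$), then union-bounding over the $2^N$ leaves and the $O(N)$ choices of $T$ via $T^{3/2}(N - T)^{3/2} \gtrsim_\epsilon N^3$, produces an overall bound of the claimed order.

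The main obstacle is bookkeeping rather than conceptual: one must track the three exponential factors $e^{-\lambda a}$, $e^{\lambda c}$, and $e^{-\lambda j}$ simultaneously, verify that the TBE hypotheses (notably $b' \leq K'$ for the shifted piece, which here reduces to $j \leq K - 1$) hold for each window, and handle the dependence of piece (ii) on the exact starting point $c \in [a, a+1]$ by passing to a supremum. The heart of the argument is simply the two-piece application of the TBE together with the crucial $2^N \cdot 2^{-N}$ cancellation provided by the Gaussian tilt at the critical slope $\lambda_N$.
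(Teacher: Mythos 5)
Your proposal is correct and follows essentially the same route as the paper's proof: union bound over the $2^{N}$ leaves and the $O(N)$ splitting times, decouple at time $T$ via the Markov property, apply the Tilted Barrier Estimate to each of the two pieces, and rely on the $2^{-T}\cdot 2^{-(N-T)}=2^{-N}$ cancellation at the critical tilt $\lambda_{N}$. The only (harmless) difference is that you additionally discretize the terminal window $m_{N}+[-x,K]$ into unit intervals and sum the resulting geometric series, whereas the paper feeds that whole window into the second TBE application; this is pure bookkeeping and in fact gives the marginally sharper factor $(x+K)$ in place of $(x+K)^{2}$, which still suffices since the stated bound is not sharp.
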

\begin{proof}
By a union bound 
\begin{align}
P(\Xi_{\epsilon,K}) & \leq2^{N}\sum_{T\geq\epsilon N}^{(1-\epsilon)N}P(S(l)\leq\lambda l+K\:\forall t\in[N],S(T)\geq\lambda T,S(N)\geq m_{N}-x).\label{eq:strip-est-sum}
\end{align}
The summand satisfies
\begin{align}
P(S(l)\leq & \lambda l+K\:\forall t\in[N],S(T)\geq\lambda T,S(N)\geq m_{N}-x)\nonumber \\
 & \leq P\left(S(l)\leq\lambda l+K\:\forall l\in[N];S(T)\in\lambda T+[0,K],S(N)\in m_{N}+[-x,K]\right)\nonumber \\
 & \leq \sum_{i=0}^{K-1}P(S(l)\leq\lambda l+K,\forall l\in[T];S(T)\in\lambda T+[i,i+1])\nonumber \\
 & \quad\cdot\max_{z\in[i,i+1]}P(S(l)\leq\lambda l+K,\forall l\in[T,N];S(N)\in m_{N}+[-x,K]\vert S(T)=\lambda T+z).
 \label{eq:strip-est-mult}
\end{align}
We now compute the multiplicands in the summand. 

The first multiplicand can be controlled by the tilted barrier estimate
\prettyref{eq:TBE} to get

\begin{align}
P^{0}\left(S(l)\leq\lambda l+K\:\forall l\in[T];S(T)\in\lambda T+[i,i+1]\right) & \lesssim2^{-T}e^{-\lambda i}\frac{K\cdot(K-i)}{T^{3/2}}.\label{eq:strip-est-A}
\end{align}
To bound the second multiplicand, first let $T'=N-T$. Observe then
that for all $z\in[0,K-1]$, we have $K-z\geq1$, so that by the titled
barrier estimate yields
\begin{align}
P(S(l)\leq\lambda l+K & \:\forall l\in[T,N];S(N)\in m_{N}+[-x,K]\vert S(T)=\lambda T+z)\nonumber \\
 & =P^{z}\left(S(l)\leq\lambda l+K\:\forall l\in[T'];S(T')\in\lambda T'+[-x,K]\right)\nonumber \\
 & \lesssim2^{-T'}e^{\lambda(x+z)}\frac{(K-z)(x+K)^{2}}{\left(T'\right)^{3/2}}\label{eq:strip-est-B}
\end{align}
Plugging \eqref{eq:strip-est-A}-\eqref{eq:strip-est-B} into \eqref{eq:strip-est-mult}
and plugging this into \eqref{eq:strip-est-sum}, yields the desired
bound.
\end{proof}
Our last estimate (once combined with the above two estimates) shows
that it is unlikely for there to be two walkers who branch on the
genealogical time scale $T$--that is, that here are two $v,w\in\partial\cT_{N}$
with $\abs{v\wedge w}=T$--and both end only order $1$ away from
the leader $m_{N}$ (recall again the interpretation of $m_{N}$ from
the beginning of the section). We make this precise in the following
lemma and corollary. 

First we fix a pair $v,w\in\partial T_{N}$ with $\abs{v\wedge w}=T$
and bound the probability of this pathological event. To this end,
define for such a pair $v,w$ the event
\begin{align}
E_{\epsilon}^{N}(T,K,v,w) & =\left\{ S_{v}(l),S_{w}(l)\leq\lambda l+K\,\forall l\in[N];S_{v}(l),S_{w}(l)\leq\lambda l\,\forall l\in N[\epsilon,(1-\epsilon)];\right.\nonumber \\
 & \qquad\qquad\left.S_{v}(N),S_{w}(N)\in m_{N}+[-x,K]\right\} .\label{eq:E-eps}
\end{align}
This is the event that the walkers corresponding to $v$ and $w$
stay below the barrier $\lambda l+K$ for all time, stay below the
barrier $\lambda l$ on the time scale $tN$, and end in the window
$[m_{N}-x,m_{N}+K]$. We control the probability of this event as
follows
\begin{lemma}
\label{lem:walkers-bound}Let $v,w\in\partial T_{N}$ be such that
$R(v,w)=t$, $T=Nt$ , $T'=N-T$ , and $t'=1-t$. Let $K\geq1$$x>0$.
Then 
\begin{align*}
P(E_{\epsilon}^{N}(T,K)) & \lesssim2^{-(N+T')}\frac{e^{2\lambda_{N}x}}{N^{\frac{3}{2}(1+t)}t^{3/2}(1-t)^{3}}(x+K)^{4}K^{4}
\end{align*}
\end{lemma}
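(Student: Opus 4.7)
The plan is to exploit the tree structure: the walkers $S_v$ and $S_w$ coincide on $[0,T]$ and, conditional on this common trajectory $\sigma$, evolve as two conditionally independent random walks $\tilde S_v$ and $\tilde S_w$ on $[T,N]$, each of length $T'=N-T$ and started from $\sigma(T)$. The first step is to decompose
\[
P(E_\epsilon^N(T,K,v,w)) = \E\bigl[\mathbf{1}_{A(\sigma)}\,P(\tilde S \in B(\sigma))^2\bigr],
\]
where $A(\sigma)$ is the portion of the event determined by the common walk, $B(\sigma)$ is the (identical) set of constraints imposed on each independent branch, and $\tilde S$ denotes a generic branch. Throughout I assume $\epsilon < t < 1-\epsilon$, which is the only regime in which the stated bound is meaningful.

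Next I discretize by the position of the common walk at the branching time. Since $T \in [N\epsilon, N(1-\epsilon)]$, the tight constraint $\sigma(l) \leq \lambda l$ on $[N\epsilon, T]$ forces $\sigma(T) \leq \lambda T$, so writing $\sigma(T) = \lambda T - z'$ with $z' \geq 0$ I bin by $z' \in [j, j+1]$ for integer $j \geq 0$. The tilted barrier estimate \prettyref{lem:(Tilted-Barrier-Estimate)}, applied with the weaker barrier $\lambda l + K$ (which is sufficient for the present bound), yields
\[
P\bigl(\sigma(l) \leq \lambda l + K\ \forall\, l \in [T];\ \sigma(T) \in \lambda T - [j, j+1]\bigr) \lesssim 2^{-T}\,N^{3t/2}\,e^{\lambda(j+1)}\,\frac{K(K+j+1)}{T^{3/2}},
\]
where the factor $N^{3t/2}$ accounts for $e^{-\lambda_N^2 T/2} = 2^{-T}\,N^{3t/2}(1+o(1))$, i.e.\ the subleading correction of $\lambda_N$ versus $\beta_c$. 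For each independent branch, started at the origin and facing the shifted barrier $\lambda\ell + (K+z')$ with endpoint in $\lambda T' + [z'-x,\,z'+K]$, a second application gives
\[
P\bigl(\tilde S(\ell) \leq \lambda \ell + K + z'\ \forall\,\ell \in [T'];\ \tilde S(T') \in \lambda T' + [z'-x,\,z'+K]\bigr) \lesssim 2^{-T'}\,N^{3(1-t)/2}\,e^{\lambda(x-z')}\,\frac{(K+z')(K+x)^2}{(T')^{3/2}}.
\]

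To finish, I multiply the common bound by the square of this independent bound and sum over $j$. The exponentials in $j$ collapse to $e^{\lambda(j+1) + 2\lambda(x-j)} \lesssim e^{2\lambda x}\,e^{-\lambda j}$, and $\sum_{j \geq 0} e^{-\lambda j}(K+j+1)^3 \lesssim K^3$ for $K\geq 1$, yielding a $K^4$ factor once combined with the $K$ from the common bound. The exponential prefactors combine to $2^{-T-2T'}=2^{-(N+T')}$, and the polynomial in $N$ collapses via
\[
\frac{N^{3t/2 + 3(1-t)}}{T^{3/2}(T')^3} = \frac{N^{3-3t/2}}{N^{9/2}\,t^{3/2}(1-t)^3} = \frac{N^{-3(1+t)/2}}{t^{3/2}(1-t)^3},
\]
reproducing the stated bound.

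The delicate point guiding the whole argument is the convergence of the sum over $j$. Without the tight barrier $\sigma \leq \lambda l$ on $[N\epsilon, N(1-\epsilon)]$, $\sigma(T)$ could range up to $\lambda T + K$ so $j$ would extend down to $-K$, and the $e^{\lambda j}$ contribution from the common walk (no longer dominated by the $e^{-2\lambda j}$ from the two branches) would introduce a spurious $e^{\lambda K}$ multiplier. Forcing $\sigma(T) \leq \lambda T$ is precisely what the middle-interval barrier buys us. The remaining work is bookkeeping: propagating the $N^{3t/2}$ correction through each of the three segments and checking that the polynomial-in-$j$ sums combine cleanly with the $K$ factors.
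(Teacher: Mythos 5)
Your proof follows the paper's argument exactly: condition on the common walk up to the branching time $T$, bin its endpoint by $S(T) \in \lambda T - [j,j+1]$ (noting $j \geq 0$ is forced by the tight barrier on $N[\epsilon,1-\epsilon]$), apply the tilted barrier estimate to the common segment and to each of the two conditionally independent branches, square the branch estimate, and sum over $j$. Your bookkeeping is actually slightly more careful than the paper's: you explicitly track $e^{-\lambda_N^2 T/2} = 2^{-T}N^{3t/2}(1+o(1))$, whereas the paper's statement of Lemma \ref{lem:(Tilted-Barrier-Estimate)} carries only the bare $2^{-T}$ prefactor and silently reinstates the $N^{3t/2}$ correction to arrive at the $N^{-3(1+t)/2}$ power in the statement of Lemma \ref{lem:walkers-bound}; your accounting makes that step transparent and reproduces the stated exponent correctly.
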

\begin{proof}
Observe that
\begin{align*}
P(E_{\epsilon}^{N}(T,&K,v,w))\\
\leq\sum_{j=0}^{\infty} & P\left(S(l)\leq\lambda l+K\,\forall l\in[T];S_{N}(T)\in\lambda l+[-j-1,-j]\right)\\
  \cdot&\max_{z\in[-j-1,-j]}P(S(l)\leq\lambda l+K\,\forall l\in[T,N];S(N)\in m_{N}+[-x,K]\vert S(T)=\lambda T+z)^{2}
\end{align*}
We bound the two multiplicands by application of the tilted barrier
estimate. Observe that the first multiplicand satisfies

\[
P(S(l)\leq\lambda l+K\:\forall l\in[T];\: S_{N}(T)\in\lambda l+[-j-1,-j])\lesssim2^{-T}e^{\lambda\cdot j}\frac{K(K+j+1)}{T^{3/2}}
\]
and that the second satisfies
\begin{align*}
P(S(l)\leq\lambda l+K\:\forall T\in[T,N]; & S(N)\in m_{N}+[-x,K]\vert S_{v}(T')=\lambda T+z)\\
 & \lesssim2^{-T'}e^{\lambda_{N}(x+z)}\frac{\max\{K+\abs{z},1\}(x+K)^{2}}{T'^{3/2}}.
\end{align*}
Here we used that $K-z\geq0$ (for us $z\leq0$). Combining the results
then yields the desired estimate, namely 
\begin{align*}
P(E_{\epsilon}^{N}(T,K,v,w)) & \lesssim2^{-(N+T')}\frac{e^{2\lambda x}}{N^{\frac{3}{2}(1+t)}t^{3/2}(1-t)^{3}}(x+K)^{4}K^{4}.
\end{align*}

\end{proof}
By an application of the union bound, we see that the previous estimate
implies that it is rare for there to be any pairs of leaves $\abs{v\wedge w}=T$
that have this behavior.
\begin{corollary}
\label{cor:walkers-number-bound}Let $v_{T},w_{T}\in\partial T_{N}$
be a pair satisfying $\abs{v\wedge w}=T$. Under the conditions of
\prettyref{lem:walkers-bound}, we have that for $\epsilon\in(0,\frac{1}{2})$
\[
\sum_{T\geq\epsilon N}^{(1-\epsilon)N}2^{N+T'}P(E_{\epsilon}(T,K,v_{T},w_{T}))\lesssim_{\epsilon}\frac{e^{2\lambda x}}{N^{\frac{1}{2}+\frac{3}{2}\epsilon}}(x+K)^{4}K^{4}.
\]

\end{corollary}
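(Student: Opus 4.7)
The plan is to substitute the per-pair bound from \prettyref{lem:walkers-bound} directly into the sum and carefully estimate the resulting geometric-type series. After the $2^{N+T'}$ prefactor in the corollary cancels the $2^{-(N+T')}$ in the lemma, the problem reduces, up to the $T$-independent factor $e^{2\lambda x}(x+K)^{4}K^{4}$, to bounding
\[
\sum_{T=\lceil\epsilon N\rceil}^{\lfloor(1-\epsilon)N\rfloor} \frac{1}{N^{3(1+t)/2}\, t^{3/2}(1-t)^{3}},
\]
where $t = T/N$. Since $t \in [\epsilon, 1-\epsilon]$, the factor $t^{3/2}(1-t)^{3}$ is uniformly bounded below by a positive constant depending only on $\epsilon$, and so can be absorbed into the implicit constant in $\lesssim_{\epsilon}$.

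What remains is $N^{-3/2}\sum_{T} q^{T}$ with $q := N^{-3/(2N)}$. This is a partial geometric series, so I would bound it by $q^{\epsilon N}/(1-q)$ and then invoke the elementary asymptotics $q^{\epsilon N} = N^{-3\epsilon/2}$ together with $1-q = 1 - e^{-3\log N/(2N)} \sim \frac{3\log N}{2N}$. This gives
\[
N^{-3/2}\sum_{T=\lceil \epsilon N \rceil}^{\lfloor (1-\epsilon)N\rfloor} q^{T} \;\lesssim\; \frac{N^{-1/2 - 3\epsilon/2}}{\log N} \;\leq\; N^{-1/2 - 3\epsilon/2},
\]
which, combined with the factor $e^{2\lambda x}(x+K)^{4}K^{4}$, is exactly the claim.

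There is no real obstacle here beyond being careful with the geometric sum. The one observation worth flagging is that, although the summand is largest at the boundary value $t = \epsilon$, the sum is not dominated by that single term: a boundary layer of width $\sim N/\log N$ contributes, introducing the $1/\log N$ factor above. Since that factor only improves the estimate, it is harmlessly absorbed into the final bound.
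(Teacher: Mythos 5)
Your computation is correct and is the natural (and presumably intended) route, since the paper leaves this corollary without an explicit proof: cancel the $2^{N+T'}$ against the $2^{-(N+T')}$ from Lemma \ref{lem:walkers-bound}, absorb $t^{3/2}(1-t)^3$ into the $\epsilon$-dependent constant, and sum the geometric series $N^{-3/2}\sum q^T$ with $q=N^{-3/(2N)}$. The only point worth tightening when writing this up is to replace the asymptotic equivalence $1-q \sim \tfrac{3\log N}{2N}$ by an honest inequality (e.g.\ $1-e^{-a}\geq a/2$ for $0\leq a\leq 1$), which yields $1/(1-q)\leq \tfrac{4N}{3\log N}$ for $N$ large; as you note, the resulting extra factor $1/\log N$ only strengthens the stated bound.
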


\subsection{Proof of \prettyref{prop:overlap-supported-0-1}}

The proof of \prettyref{prop:overlap-supported-0-1} will now follow
immediately from an application of the following two estimates. The
first estimate says that below the critical temperature, the Gibbs
measure gives no mass to points that more than a large, but order
1, distance from the leader. This follows more or less immediately
from the sub-gaussian tails of the increments and the tightness of
the (centered) leader, $M_{N}-m_{N}$. 
\begin{lemma}
\label{lem:low-points-mass-bound}Let $\beta>\beta_{c}$. Then for
each $x$, 
\[
\lim_{x\to\infty}\limsup_{N\to\infty}\E G_{N}(H\leq m_{N}-x)=0.
\]
\end{lemma}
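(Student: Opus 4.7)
The plan is to combine tightness of $M_N-m_N$ with a barrier-restricted first-moment estimate. Tightness lets us replace the denominator of $G_{N,\beta}$ by a deterministic lower bound, while the tilted-barrier machinery of \prettyref{sub:Tilted-Barrier-Estimates} lets us control the numerator.

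First I would decompose
\[
\E G_{N,\beta}(H\leq m_N-x)\leq \E\!\left[G_{N,\beta}(H\leq m_N-x)\mathbbm{1}_{\cM_K\cap\{\Gamma_B^N\}^c}\right]+\epsilon_1(K)+P(\Gamma_B^N),
\]
where $\cM_K=\{\abs{M_N-m_N}\leq K\}$ and $\Gamma_B^N$ is the event in \eqref{eq:barrier-event}. On the good event one has $Z_N\geq e^{\beta M_N}\geq e^{\beta(m_N-K)}$ and every walker $S_v$ satisfies $S_v(l)\leq\lambda l+B$ for all $l\in[N]$, so
\[
G_{N,\beta}(H\leq m_N-x)\leq e^{-\beta(m_N-K)}\sum_{v\in\partial\cT_N}e^{\beta H(v)}\mathbbm{1}\!\left(H(v)\leq m_N-x,\;S_v(l)\leq\lambda l+B\;\forall l\in[N]\right).
\]

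Next I would decompose the restricted sum by level, writing the event $\{H(v)\leq m_N-x\}$ as the disjoint union of the windows $\{H(v)\in[m_N-k-1,m_N-k]\}$ for integer $k\geq x$. Applying the tilted-barrier estimate (Lemma \ref{lem:(Tilted-Barrier-Estimate)}) to each single-walker probability of landing in such a window while obeying the barrier, and then multiplying by $2^N$ to pass from one walker to all leaves, gives an expected count of barrier-compliant walkers in the $k$-th window bounded by $\lesssim B(B+k)e^{\beta_c k}$. Weighting by $e^{\beta(m_N-k)}$ and summing over $k\geq x$ produces the principal bound
\[
\E\!\left[G_{N,\beta}(H\leq m_N-x)\mathbbm{1}_{\cM_K\cap\{\Gamma_B^N\}^c}\right]\lesssim B(B+x)\,e^{\beta K}\,e^{-(\beta-\beta_c)x}.
\]

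Finally I would choose $B=\kappa\log N$ with $\kappa$ as in Lemma \ref{lem:barrier-bound} so that $P(\Gamma_B^N)=o_N(1)$, pass to $\limsup_{N\to\infty}$, and then send first $x\to\infty$ and then $K\to\infty$ to kill the remaining errors. The main technical obstacle is precisely this last step: the tilted-barrier bound introduces a polynomial-in-$B$ factor, while the requirement $P(\Gamma_B^N)\to 0$ forces $B$ to grow at least logarithmically in $N$. Careful orchestration of the interchanged limits --- or, equivalently, splitting the sum into the near-leader contribution $k\leq B$ and the deep contribution $k>B$ and handling them by separate estimates --- is needed to ensure the resulting polylogarithmic-in-$N$ loss is absorbed by the exponential decay $e^{-(\beta-\beta_c)x}$.
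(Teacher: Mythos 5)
Your overall plan---tightness of $M_N-m_N$ to lower-bound $Z$, then a barrier-restricted bound on the number of leaves in each window $m_N-[k,k+1]$, then sum against $e^{-\beta k}$---is the right shape, but the quantitative input you propose is not strong enough, and the obstacle you flag at the end is a genuine gap, not a bookkeeping issue.

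The problem is the order of the limits. You correctly compute that the barrier-restricted expected count in window $k$ is $\lesssim B(B+k)e^{\beta_c k}$, and that $P(\Gamma_B^N)\to 0$ forces $B\geq\kappa\log N$. This makes your principal bound
\[
\E\bigl[G_{N,\beta}(H\leq m_N-x)\mathbbm{1}_{\cM_K\cap(\Gamma_B^N)^c}\bigr]\lesssim e^{\beta K}\,(\log N)(\log N+x)\,e^{-(\beta-\beta_c)x},
\]
which \emph{diverges} as $N\to\infty$ with $x,K$ held fixed. Since the statement requires $\limsup_{N\to\infty}$ to be taken \emph{first} and $x\to\infty$ only afterward, the $(\log N)^2$ factor cannot be absorbed by $e^{-(\beta-\beta_c)x}$; it must vanish in $N$ alone, and it does not. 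Splitting into $k\leq B$ and $k>B$ does not remove the $B^2=(\log N)^2$ factor from the near-leader range either, so the proposed remedy does not close the gap.

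What the paper actually does is import a \emph{concentration} bound for the counting function rather than a first-moment bound. Writing $\cN_y^N$ for the number of leaves with $H_N(v)\in m_N-[y,y+1]$, the paper invokes the estimate from Ding--Zeitouni \cite[Prop.~3.3]{DingZeit14} that
\[
P\bigl(\cN_y^N\geq e^{\beta_c(y+u)}\bigr)\lesssim e^{-\beta_c u+C\log_+(y+u)},
\]
which says the \emph{typical} size of $\cN_y^N$ is $e^{\beta_c y}$ up to a polynomial-in-$y$ correction, with \emph{no} $\log N$ factor. (The naive first moment of $\cN_y^N$ is of order $N e^{\beta_c y}$, and your flat-barrier truncation only improves the prefactor $N$ to $(\log N)^2$; the upper tail of $\cN_y^N$ is genuinely much lighter than its mean suggests.) Fixing $\alpha>0$ with $\beta>\beta_c(1+\alpha)$, the paper unions this over $y\geq x$ to form a good event on which $\cN_y^N\leq e^{\beta_c(1+\alpha)y}$ for all $y\in[x,u_N]$, and on that event together with $\cM_K$ one gets $\sum_{y\geq x}\cN_y^N e^{-\beta y}\lesssim e^{-(\beta-\beta_c(1+\alpha))x}$, which is $N$-independent. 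The large $y>u_N$ range is handled by the raw first moment. Taking $K$ proportional to $x$ then kills all remaining errors as $x\to\infty$.

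In short: the flat-barrier tilted estimates of \prettyref{sub:Tilted-Barrier-Estimates} are the right tool for \prettyref{lem:high-points-mass-bound}, where a $\mathrm{polylog}(N)/N^{c}$ loss is affordable, but for the present lemma one needs either the Bramson-style \emph{curved} (logarithmic) barrier, which keeps the truncated first moment at $\mathrm{poly}(k)\,e^{\beta_c k}$ with no $\log N$, or---as the paper chooses---the concentration estimate for $\cN_y^N$ that encodes the output of that curved-barrier argument. Your proposal would be repaired by replacing the flat barrier by a curved one, or by invoking the Ding--Zeitouni concentration bound directly as the paper does.
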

\begin{proof}
Before we begin, we make the following useful definitions. For readability,
we suppress dependence on $N$ whenever it is unambiguous. Let 
\[
V_{y}^{N}=\left\{ v\in\partial T_{N}:H_{N}(v)\in m_{N}-[y,y+1]\right\} 
\]
and $\cN_{y}^{N}=card(V_{y}^{N})$. Choose $\alpha=\alpha(\beta)$
such that $\beta>\beta_{c}(1+\alpha)$. Let $u_{n}=u_{n}(\alpha)=(1+\alpha)^{-1}N^{1/2}2^{-1/2}$.

Now, recall from \cite[Prop. 3.3]{DingZeit14} that for all $y\geq0$,
\begin{align*}
\E\cN_{y}^{N} & \lesssim Ne^{-\beta_{c}y-\frac{y^{2}}{2N}}
\end{align*}
and there is a universal constant $c_{1}$ such that for all $y,u$
with $0\leq u+y\leq\sqrt{N},$ $u\geq-y$, 
\[
P(\cN_{y}^{N}\geq e^{\beta_{c}(y+u)})\lesssim e^{-\beta_{c}u+C\log_{+}(y+u)}.
\]
Let $x_{0}=x_{0}(\alpha,\beta)$ be such that for all $y\geq x_{0}$,
$c_{1}\log((1+\alpha)y)\leq\frac{\beta\alpha}{2}y$. Then for all
$y\in[x,u_{N}]$, 
\[
P(\cN_{y}^{N}\geq e^{\beta_{c}(1+\alpha)y})\lesssim e^{-\frac{\beta_{c}\alpha}{2}y}.
\]

This implies that

\[
\E G_{N}(H_{N}\leq m_{N}-x)\leq\epsilon_{1}(K)+\E G_{N}(H_{N}\leq m_{N}-x)\indicator{\mathcal{T}_{K}}.
\]
Let 
\[
B_{N}^{\alpha}(x)=\left\{ \exists y\in[x,u_{N}]\cap\N:\cN_{y}^{N}\geq e^{\beta_{c}(1+\alpha)y}\right\} .
\]
Note that on the event $\mathcal{M}_{K}$, we have $Z\geq e^{\beta m_{N}-\beta K}$
from which it follows that 
\begin{align*}
\E G_{N}(H_{N}(v)\leq m_{N}-x)\indicator{\mathcal{T}_{K}} & \leq P(B_{N}^{\alpha}(x))+\E\sum_{y=x}^{\infty}\sum_{H\in V_{y}^{N}}\frac{e^{\beta H}}{Z}\indicator{\mathcal{T}_{K}}\indicator{B_{N}^{\alpha}(x)^{c}}\\
 & \lesssim\sum_{y=x}^{u_{N}}e^{-\frac{\beta_{c}\alpha}{2}y}+e^{\beta K}\left(\sum_{y=x}^{u_{N}}e^{-(\beta-\beta_{c}(1+\alpha))y}+\sum_{u_{N}}^{\infty}Ne^{-(\beta-\beta_{c})y}\right)\\
 & \lesssim_{\alpha,\beta}e^{-c(\alpha)x}+e^{\beta K}\left(e^{-c'(\alpha)x}+Ne^{-c''(\beta)u_{N}}\right).
\end{align*}
Putting these together yields 
\[
\limsup_{N\to\infty}\E G_{N}(H\leq m_{N}-x)\lesssim_{\alpha,\beta}\epsilon_{1}(K)+e^{-c(\alpha)x}+e^{\beta K-c'(\alpha)x}.
\]
Since $K$ was arbitrary, we may take $K=\frac{c'(\alpha)x}{2\beta}$,
and the result follows.
\end{proof}
We now show that it is unlikely to have two points that are both order
1 away from the leader, but overlap strictly in $(0,1)$. The idea
of this estimate is that for this to happen there must be two walkers,
$S_{v_{1}}\text{ and }S_{v_{2}}$, whose branching time is of order
$N$ and both land order $1$ away from the leader. This event is
rare by the above.
\begin{lemma}
\label{lem:high-points-mass-bound}Let $\beta>\beta_{c}$. For all
$x$ and all $A\subsetneq(0,1)$, we have that 
\[
\limsup\E G^{\tensor2}(R_{12}\in A;H(v_{1}),H(v_{2})\geq m_{N}-x)=0.
\]
\end{lemma}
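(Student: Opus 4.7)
My plan is to bound $\E G_N^{\otimes2}(R_{12}\in A;\,H(v_1),H(v_2)\geq m_N-x)$ by restricting to a high-probability event on which the tilted barrier estimates apply directly to pairs of walkers. Since $A$ is compactly contained in $(0,1)$, I first fix $\epsilon\in(0,1/2)$ with $A\subset[\epsilon,1-\epsilon]$; then any pair with overlap in $A$ branches at time $T=NR(v_1,v_2)\in N[\epsilon,1-\epsilon]$, which is precisely the regime in which the two-walker estimate of Lemma \ref{lem:walkers-bound} produces decay in $N$.

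I introduce two parameters: a constant $K$, eventually sent to infinity, and a logarithmic barrier height $L=\lceil\kappa\log N\rceil$ with $\kappa>\tfrac{5}{2(\beta_c-3/(2e))}$, chosen so that Lemma \ref{lem:barrier-bound} gives $P(\Gamma_L^N)=o_N(1)$. Define the good event
\[
\mathcal{G}_N=\mathcal{M}_K\cap(\Gamma_L^N)^c\cap(\Xi^N_{\epsilon,L})^c.
\]
By tightness of $M_N-m_N$ together with Lemmas \ref{lem:barrier-bound} and \ref{lem:stip-bound}, $P(\mathcal{G}_N^c)\leq\epsilon_1(K)+o_N(1)$. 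On $\mathcal{G}_N$ one has $Z\geq e^{\beta(m_N-K)}$ and $H(v)\leq m_N+K$ for every leaf; every walker stays below $\lambda l+L$; and any walker with $H(v)\geq m_N-x$ must stay strictly below $\lambda l$ throughout $l\in N[\epsilon,1-\epsilon]$, because $(\Xi^N_{\epsilon,L})^c$ combined with the barrier bound rules out entering the strip $[\lambda l,\lambda l+L]$ in the middle window. Hence every admissible pair falls in a variant $\tilde E_\epsilon(T,L,K,v_1,v_2)$ of the event \eqref{eq:E-eps} with barrier $L$ and final window $[m_N-x,m_N+K]$.

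On $\mathcal{G}_N$ the Gibbs estimate gives
\[
G_N^{\otimes2}(R_{12}\in A;\,H(v_i)\geq m_N-x)\,\indicator{\mathcal{G}_N}\leq e^{4\beta K}\sum_{v_1,v_2:\,R(v_1,v_2)\in A}\indicator{\tilde E_\epsilon(T,L,K,v_1,v_2)}.
\]
Taking expectations, summing over $T\in N[\epsilon,1-\epsilon]$, and applying a straightforward extension of Corollary \ref{cor:walkers-number-bound} --- obtained by rerunning the proof of Lemma \ref{lem:walkers-bound} with the barrier height $L$ and terminal window $[-x,K]$ as separate parameters in Lemma \ref{lem:(Tilted-Barrier-Estimate)} --- one gets an upper bound of the form $e^{4\beta K}\cdot\text{poly}(L,K,x)\cdot e^{2\lambda_N x}/N^{\gamma}$ for some $\gamma=\gamma(\epsilon)>0$. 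With $K$ fixed and $L=O(\log N)$ this tends to zero. Hence $\limsup_N\E G_N^{\otimes2}(\cdots)\leq\epsilon_1(K)$, and sending $K\to\infty$ completes the proof.

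The main technical difficulty is the tension between the two parameters. The Gibbs-weight bound produces the prefactor $e^{4\beta K}$, forcing $K$ to remain bounded; on the other hand, Lemma \ref{lem:barrier-bound} only gives $P(\Gamma_L^N)\to 0$ when the barrier grows at least logarithmically in $N$. Using a single constant for both the concentration event and the barrier would produce a polynomial blow-up in $N$ that cannot be absorbed by the decay from the corollary. The resolution is to use distinct parameters --- a fixed $K$ for $\mathcal{M}_K$ and the Gibbs estimate, and a logarithmic $L$ for the barrier and strip events --- which is the key structural feature making the tilted-barrier approach go through here.
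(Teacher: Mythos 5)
Your proof is correct, and the core strategy is the same as the paper's: fix $\epsilon$ with $A\subset[\epsilon,1-\epsilon]$, restrict to the complement of the barrier event $\Gamma_L^N$ and the strip event $\Xi_{\epsilon,L}^N$ with $L$ of order $\log N$, and bound the expected number of surviving pairs via Corollary~\ref{cor:walkers-number-bound}. Where you diverge is in superfluously adding the tightness event $\mathcal{M}_K$ and a Gibbs-weight estimate $G_N(v_1)G_N(v_2)\leq e^{4\beta K}$. The paper does neither: it simply uses $G_N(v_1)G_N(v_2)\leq 1$ to bound $\E G_N^{\tensor 2}(\cdot)\indicator{\text{good}}$ by the expected count of bad pairs, and it lets $(\Gamma_L^N)^c$ itself confine the terminal positions to $m_N+[-x,L]$ (since no walker can end above $\lambda N+L=m_N+L$ on that event), so the single parameter $L$ serves as both barrier height and terminal window upper bound, and Corollary~\ref{cor:walkers-number-bound} applies verbatim with $K=L$. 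This eliminates the need for $\mathcal{M}_K$, for the Gibbs-weight bound, for the modified corollary with separated parameters, and for the final $K\to\infty$ step: the right-hand side already tends to $0$ as $N\to\infty$. In particular, the ``tension between $K$ and $L$'' you single out as the key structural difficulty is an artifact of your choice to bound the Gibbs weights nontrivially; once one observes that $G_N(v_1)G_N(v_2)\leq 1$ is enough, the tension disappears. Your version works, but it is a longer path to the same destination, and its extra moving parts (the modified walker corollary with independent barrier/window parameters, the need to verify $K\leq L$ for large $N$ so the tilted-barrier estimate applies) obscure what is essentially a one-parameter argument.
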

\begin{proof}
Recall the events $\Gamma_{L}$, $\Xi_{L,\epsilon}^{N}$, and $E_{\epsilon}^{N}(T,L,v_{T},w_{T})$
from \prettyref{eq:barrier-event}\prettyref{eq:walker-in-strip},and\prettyref{eq:E-eps}
respectively where $v_{T},w_{T}$ are as per Corollary \ref{cor:walkers-number-bound}.
Observe that 
\begin{align*}
\E G_{N}^{\tensor2}(R_{12}&\in A;\, H_{N}(v_{1}),H_{N}(v_{2})\geq m_{N}-x)\\
 & \leq P(\Gamma_{L}^{N})+P(\Xi_{L,\epsilon}^{N})+\E G_{N}^{\tensor2}(R_{12}\in A;H_{N}(v_{1}),H_{N}(v_{2})\geq m_{N}-x)\indicator{\left(\Gamma_{L}^{N}\right)^{c},\left(\Xi_{L,\epsilon}^{N}\right)^{c}}\\
 & \leq P(\Gamma_{L}^{N})+P(\Xi_{L,\epsilon}^{N})+\E\sum_{R_{12}\in(\epsilon,1-\epsilon)}\indicator{H_{N}(v_{1}),H_{N}(v_{2})\in m_{N}+[-x,K]}\indicator{\left(\Gamma_{L}^{N}\right)^{c},\left(\Xi_{L,\epsilon}^{N}\right)^{c}}\\
 & \leq P(\Gamma_{L}^{N})+P(\Xi_{L,\epsilon}^{N})+\sum_{T\geq\epsilon N}^{(1-\epsilon)N}2^{N+T}P(E_{\epsilon}^{N}(T,L,v_{T},w_{T}))\\
 & \lesssim_{\epsilon}P(\Gamma_{L}^{N})+\frac{L^{4}(x+L)}{N^{1/2}}e^{\lambda_{N}x}+\frac{e^{2\lambda_{N}x}}{N^{\frac{1}{2}+\frac{3}{2}\epsilon}}(x+L)^{4}L^{4}
\end{align*}
where the last inequality follows by \prettyref{lem:stip-bound} and
Corollary \ref{cor:walkers-number-bound}. Choosing $L=c\log N$ for
$c$ large enough, and sending $N\to\infty$ then yields the result
after applying \prettyref{lem:stip-bound}.
\end{proof}
We now prove \emph{\prettyref{prop:overlap-supported-0-1}.}
\begin{proof}
\emph{of \prettyref{prop:overlap-supported-0-1}} By \cite{ChauvRou97},
it suffices to take $\beta>\beta_{c}$. Note that it suffices to show
that $\mu_{N}(\epsilon,1-\epsilon)\to0.$ Let $A=(\epsilon,1-\epsilon)$.
Observe that 
\begin{align*}
\mu_{N}(A) & =\E G_{N}^{\tensor2}\left(R_{12}\in A;H(v_{1})\wedge H(v_{2})\leq m_{N}-x\right)\\
&\qquad \qquad\qquad+E G_{N}^{\tensor2}\left(R_{12}\in A;H(v_{1}),H(v_{2})\geq m_{N}-x\right)\\
 & \leq2\E G_{N}(H_{N}\leq m_{N}-x)+\E G_{N}^{\tensor2}\left(R_{12}\in A;H(v_{1}),H(v_{2})\geq m_{N}-x\right).
\end{align*}
The result then follows from \prettyref{lem:low-points-mass-bound}
and \prettyref{lem:high-points-mass-bound} by taking $N\to\infty$
and then $x\to\infty$.
\end{proof}

\section{The Derrida-Spohn conjecture and the Ghirlanda-Guerra Identities}

In this section we prove the Derrida-Spohn conjecture and show that
the Branching Random Walk satisfies the Ghirlanda-Guerra Identities.

\subsection{Derrida-Spohn Conjecture}

The proof of the Derrida-Spohn conjecture will follow immediately
after the following technical preliminaries. Recall first the following
result of Chauvin and Rouault.
\begin{proposition}
\label{prop:chauv-rou}(Chauvin-Rouault \cite{ChauvRou97}) The free
energy satisfies 
\begin{equation}
F(\beta)=\lim\frac{1}{N}\E\log Z_{N}=\begin{cases}
\log2+\beta^{2}/2 & \beta<\beta_{c}\\
\beta_{c}\beta & \beta\geq\beta_{c}
\end{cases}.\label{eq:CR-FE}
\end{equation}

\end{proposition}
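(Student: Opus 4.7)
The plan is a matching upper and lower bound strategy, splitting at the critical temperature. The universal upper bound is the annealed one: Jensen's inequality gives
$$\frac{1}{N}\E\log Z_N \leq \frac{1}{N}\log \E Z_N = \log 2 + \frac{\beta^2}{2},$$
which is tight for $\beta < \beta_c$ and strictly larger than $\beta\beta_c$ for $\beta > \beta_c$ (with equality at $\beta = \beta_c$, since $\log 2 + \beta_c^2/2 = \beta_c^2$).

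For the high-temperature regime ($\beta < \beta_c$), I would obtain the matching lower bound via Biggins' martingale theorem. The normalized partition function $W_N = Z_N \cdot e^{-N(\log 2 + \beta^2/2)}$ is a non-negative mean-one martingale with respect to the generation filtration, and Biggins' $L\log L$ criterion (equivalently $\beta\psi'(\beta) < \psi(\beta)$ with $\psi(\beta) = \log 2 + \beta^2/2$, which reduces to $\beta^2/2 < \log 2$, i.e., $\beta < \beta_c$) yields $W_N \to W_\infty > 0$ almost surely. Taking logarithms gives the pointwise limit $\frac{1}{N}\log Z_N \to \log 2 + \beta^2/2$, and Gaussian concentration (Borell--TIS applied to the $\beta$-Lipschitz functional $\log Z_N$ of the disorder) upgrades this to $L^1$ convergence of the mean free energy.

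For the low-temperature regime ($\beta \geq \beta_c$), the lower bound $Z_N \geq e^{\beta M_N}$ combined with Bramson's asymptotic $M_N = m_N + O_P(1)$ and uniform integrability of $M_N/N$ gives $\frac{1}{N}\E\log Z_N \geq \beta\beta_c + o(1)$. The matching upper bound requires truncation: on the high-probability event $(\Gamma_K^N)^c$ from Lemma \ref{lem:barrier-bound} with $K = \kappa\log N$, no walker exceeds the tilted barrier, so in particular $H_N(v) \leq \beta_c N + K$ for every $v\in\partial\cT_N$. Bounding
$$Z_N \leq \sum_v e^{\beta H_N(v)}\mathbf{1}\{H_N(v) \leq \beta_c N + K\}$$
and applying the standard large-deviations count $\E \#\{v : H_N(v)/N \in d\alpha\} \lesssim \exp(N(\log 2 - \Lambda(\alpha)))$ together with a Laplace-type integration against $e^{\beta\alpha N}$ over $\alpha \leq \beta_c + o(1)$: since the exponent $\log 2 - \Lambda(\alpha) + \beta\alpha$ is unconstrainedly maximized at $\alpha = \beta > \beta_c$, its constrained maximum is attained at the boundary $\alpha = \beta_c$ and equals $\beta\beta_c$. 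This produces $\E Z_N \lesssim e^{N\beta\beta_c + o(N)}$ on the truncated event, and Gaussian concentration of $\log Z_N$ then passes the bound to $\E \log Z_N$.

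The main obstacle is the low-temperature upper bound, because the annealed bound $\log 2 + \beta^2/2$ strictly overshoots $\beta\beta_c$ whenever $\beta > \beta_c$, and one must truncate out the atypical high-energy walkers whose contribution to $\E Z_N$ dominates the true free energy. The barrier estimates of Section 2 (or, equivalently, Bramson's original tilted random-walk arguments) are essential here. A secondary subtlety is that the naive $L^2$ second moment method applied directly to $Z_N$ yields the correct high-temperature free energy only for $\beta < \sqrt{\log 2}$, so covering the full interval $\beta < \sqrt{2\log 2} = \beta_c$ requires either Biggins' $L\log L$ criterion or a truncated second moment method in the spirit of the present paper.
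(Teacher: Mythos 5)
The paper does not prove \prettyref{prop:chauv-rou} itself; it cites it directly to Chauvin--Rouault \cite{ChauvRou97} as external input. There is therefore no in-paper proof to compare against, and your task reduces to whether your reconstruction is sound. It is. Your four ingredients are exactly the canonical ones: (i) the annealed Jensen upper bound $\log 2 + \beta^2/2$ valid for all $\beta$ and tight precisely for $\beta \le \beta_c$; (ii) the Biggins/Kahane--Peyri\`ere martingale convergence for $\beta<\beta_c$ (the condition $\beta\psi'(\beta)<\psi(\beta)$ with $\psi(\beta)=\log 2 + \beta^2/2$ is indeed $\beta<\beta_c$, and the $L\log L$ condition is automatic with Gaussian increments), upgraded from a.s.\ convergence of $\frac{1}{N}\log Z_N$ to convergence of the mean via Borell--TIS concentration of the $\beta\sqrt N$-Lipschitz functional $\log Z_N$; (iii) the trivial lower bound $Z_N\geq e^{\beta M_N}$ together with $\E M_N / N \to \beta_c$ for $\beta\geq\beta_c$; and (iv) a truncated first-moment bound on the barrier event $(\Gamma_K^N)^c$, showing $\E\bigl[Z_N\mathbf{1}\{Z_N = Z_N^{\mathrm{trunc}}\}\bigr] \lesssim e^{N\beta\beta_c + o(N)}$, then passing to $\E\log Z_N$ via Markov plus concentration. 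Your remark that the raw second-moment method only covers $\beta<\sqrt{\log 2}$ is also correct, and correctly diagnoses why either the Biggins martingale or a truncated moment is needed to reach $\beta_c=\sqrt{2\log 2}$.

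The one place your sketch compresses a genuine (if routine) step is the passage in (iv) from ``$\E Z_N^{\mathrm{trunc}} \lesssim e^{N\beta\beta_c+o(N)}$'' to a bound on $\E\log Z_N$: you need to first convert the moment bound to a high-probability bound $\log Z_N^{\mathrm{trunc}} \le N\beta\beta_c + \epsilon N$ by Markov, intersect with $(\Gamma_K^N)^c$ (on which $Z_N=Z_N^{\mathrm{trunc}}$), and only then invoke concentration to conclude $\E\log Z_N\le N\beta\beta_c+2\epsilon N$. You flag concentration as the tool, so this is a presentational gap rather than a logical one. I would also note that for the purposes of this paper the truncation in (iv) could be run directly off Lemma \ref{lem:barrier-bound} with $K=\kappa\log N$ as you suggest, which keeps the argument self-contained within the machinery of Section \ref{sec:support-is-zero-one} rather than relying on Bramson's original barrier estimates.
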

Recall the following integration-by-parts. Let $\Sigma$ denote an
at most countable set; $(x(\sigma))_{\sigma\in\Sigma}$ and $(y(\sigma))_{\sigma\in\Sigma}$
be centered Gaussian processes with uniformly bounded variances and
mutual covariance $C(\sigma^{1},\sigma^{2})=\E x(\sigma^{1})y(\sigma^{2})$
; $G'$ be a finite measure on $\Sigma$; $Z=\sum e^{y(\sigma)}$;
and $G(\sigma)=e^{y(\sigma)}G'(\sigma)/Z$. 
\begin{lemma}
\cite[Lemma 2]{PanchSKBook}\label{lem:GGIBP}(Gibbs-Gaussian Integration-by-parts).
We have the identity 
\[
\E\left\langle x(\sigma)\right\rangle _{G}=\E\left\langle C(\sigma^{1},\sigma^{1})-C(\sigma^{1},\sigma^{2})\right\rangle _{G^{\tensor2}}.
\]
Furthermore, for any bounded measurable $f$ on $\Sigma^{n}$, 

\[
\E\left\langle f(\sigma^{1},\ldots,\sigma^{n})x(\sigma^{1})\right\rangle _{G^{\tensor n}}=\E\left\langle f(\sigma^{1},\ldots,\sigma^{n})\left(\sum_{k=1}^{n}C(\sigma^{1},\sigma^{k})-nC(\sigma^{1},\sigma^{n+1})\right)\right\rangle _{G^{\tensor n+1}}.
\]

\end{lemma}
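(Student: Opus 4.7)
The plan is to deduce both identities from the same Gaussian integration by parts (Stein's lemma) applied to the jointly Gaussian family $\{x(\sigma),y(\sigma)\}_{\sigma\in\Sigma}$: for any sufficiently regular functional $F$ of $y$ alone,
\[
\E[x(\sigma)F(y)] = \sum_{\tau\in\Sigma} C(\sigma,\tau)\,\E\!\left[\tfrac{\partial F}{\partial y(\tau)}\right].
\]
The uniform variance bound on $y$, combined with the probability normalization $\sum_\sigma G(\sigma)\equiv 1$, is what will let me exchange the sum over an infinite $\Sigma$ with $\E$: one truncates to a finite exhaustion $\Sigma_M\uparrow\Sigma$ and passes to the limit by dominated convergence.

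The key derivative calculation is $\partial_{y(\tau)}\log G(\sigma)=\partial_{y(\tau)}\bigl(y(\sigma)-\log Z\bigr)=\delta_{\sigma\tau}-G(\tau)$, so that for any tuple $(\sigma^1,\ldots,\sigma^n)$,
\[
\frac{\partial}{\partial y(\tau)}\prod_{i=1}^{n} G(\sigma^i) \;=\; \left(\sum_{i=1}^{n}\delta_{\sigma^i\tau} \;-\; n\,G(\tau)\right)\prod_{i=1}^{n}G(\sigma^i).
\]

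To obtain the $n$-replica identity, I multiply $x(\sigma^1)$ by $f(\sigma^1,\ldots,\sigma^n)\prod_i G(\sigma^i)$, take $\E$, sum over $(\sigma^1,\ldots,\sigma^n)\in\Sigma^n$, and apply Stein's lemma to the Gaussian $x(\sigma^1)$ against the remaining function of $y$ (noting that $f$ does not see $y$). The $\delta_{\sigma^i\tau}$ bundle, summed against $C(\sigma^1,\tau)$, collapses to $\sum_{k=1}^{n} C(\sigma^1,\sigma^k)$ integrated against $G^{\otimes n}$ (and hence against $G^{\otimes n+1}$, since the integrand is independent of $\sigma^{n+1}$); the $-nG(\tau)$ bundle, summed against $C(\sigma^1,\tau)$, is by definition the $G$-average of $C(\sigma^1,\sigma^{n+1})$, producing $-n\,C(\sigma^1,\sigma^{n+1})$ under $G^{\otimes n+1}$. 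Combining the two pieces gives the stated formula, and the first assertion of the lemma is simply the specialization $n=1$, $f\equiv 1$.

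The main obstacle is not the differentiation but the bookkeeping around the (possibly) infinite index set: one must justify interchanging $\sum_\sigma$ with $\E$ and with $\partial_{y(\tau)}$. The uniform variance bound on $y$ (which gives finite exponential moments for $\max_{\sigma\in\Sigma_M}|y(\sigma)|$ on any finite truncation) together with $\sum_\sigma G(\sigma)=1$ provide the domination needed for a truncation argument on $\Sigma_M\uparrow\Sigma$; everything else is the direct computation above.
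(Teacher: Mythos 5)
The paper does not prove this lemma itself; it cites it directly as \cite[Lemma 2]{PanchSKBook}. Your proof is the standard argument found there and is correct: Gaussian integration by parts applied to $x(\sigma^1)$ against the functional $F=f\cdot\prod_i G(\sigma^i)$, together with the derivative identity $\partial_{y(\tau)}\log G(\sigma)=\delta_{\sigma\tau}-G(\tau)$, yields exactly the $n$-replica formula, and the first identity is the $n=1$, $f\equiv 1$ case. One small remark: when truncating to a finite $\Sigma_M$, the Gibbs weight also changes (the partition function $Z_M$ differs from $Z$), so the dominated-convergence step requires controlling $G_M\to G$ as well as the tails of the sum over $\tau$; both are indeed handled by the uniform variance bound plus $\sum_\sigma G(\sigma)=1$, so the gap is cosmetic rather than substantive.
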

As a consequence we have the following
\begin{corollary}
\label{cor:diff-fe}We have 
\[
F^{\prime}(\beta)=\beta\int(1-x)d\mu
\]
where $\mu$ is a limit point of the mean overlap measure. \end{corollary}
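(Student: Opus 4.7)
The plan is to differentiate $F_N(\beta)=\tfrac{1}{N}\E\log Z_N$ in $\beta$, identify the derivative in terms of the mean overlap using the Gibbs-Gaussian integration-by-parts lemma, and then pass to the limit $N\to\infty$ using convexity.

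First I would compute
\[
\partial_\beta F_N(\beta)=\frac{1}{N}\E\langle H_N(v)\rangle_{G_{N,\beta}}.
\]
To evaluate the right side, I would apply \prettyref{lem:GGIBP} with $\Sigma=\partial\cT_N$, $x(v)=H_N(v)$, $y(v)=\beta H_N(v)$, and $G'$ counting measure. The mutual covariance is $C(v^1,v^2)=\beta\,\E H_N(v^1)H_N(v^2)=\beta\lvert v^1\wedge v^2\rvert=\beta N\,R(v^1,v^2)$. The lemma then yields
\[
\E\langle H_N(v)\rangle_{G_{N,\beta}}=\E\langle C(v^1,v^1)-C(v^1,v^2)\rangle_{G_{N,\beta}^{\tensor 2}}=\beta N\bigl(1-\E\langle R_{12}\rangle_{G_{N,\beta}^{\tensor 2}}\bigr),
\]
since $R(v^1,v^1)=1$. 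Dividing by $N$ gives
\[
F_N'(\beta)=\beta\Bigl(1-\int_0^1 x\,d\mu_{N,\beta}(x)\Bigr).
\]

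Next I would upgrade this identity to the limit. Each $F_N$ is convex in $\beta$ because $\partial_\beta^2\log Z_N=\langle H_N^2\rangle-\langle H_N\rangle^2\ge 0$, so $F$ is convex as a pointwise limit of convex functions and is therefore differentiable outside a countable set. By a standard fact about pointwise convergence of convex functions, at any point $\beta$ of differentiability of $F$ we have $F_N'(\beta)\to F'(\beta)$. Along a subsequence for which $\mu_{N,\beta}\rightharpoonup\mu$, the bounded continuous function $x\mapsto x$ on $[0,1]$ gives $\int x\,d\mu_{N,\beta}\to\int x\,d\mu$, and hence
\[
F'(\beta)=\beta\Bigl(1-\int x\,d\mu\Bigr)=\beta\int(1-x)\,d\mu,
\]
as claimed.

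I do not foresee a serious obstacle: the computation is a direct application of \prettyref{lem:GGIBP}, and the only non-algebraic step is the interchange of $N\to\infty$ with $\partial_\beta$, which is handled by convexity of $F_N$. The only mild subtlety is that the identity initially holds at points where $F$ is differentiable, but since the left-hand side is continuous (by convexity, $F$ is left/right differentiable everywhere and agrees with $F'$ almost everywhere) and the right-hand side is also well-defined for every limit point $\mu$, the statement as phrased follows on the (full-measure) set where $F$ is differentiable.
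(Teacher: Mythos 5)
Your argument is essentially the paper's proof: the same application of \prettyref{lem:GGIBP} to compute $F_N'(\beta)=\beta\int(1-x)\,d\mu_{N,\beta}$, the same appeal to convexity to get $F_N'\to F'$, and the same weak-convergence step for the integral. The only place you differ is in handling differentiability of the limit $F$. You argue around it with an ``a.e.\ differentiability plus continuity'' discussion that is a bit loose (for a general convex function, $F'$ need not be continuous where it exists), whereas the paper simply observes that $F$ is $C^1$; this is not an abstract convexity fact but follows from the explicit Chauvin--Rouault formula in \prettyref{prop:chauv-rou}, whose two branches $\log 2+\beta^2/2$ and $\beta_c\beta$ are each smooth and have matching derivative $\beta_c$ at $\beta=\beta_c$. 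Citing that explicit formula would make your final paragraph cleaner and would establish the identity at every $\beta$, which is what the corollary asserts and what the proof of \prettyref{thm:(Derrida-Spohn-conjecture)} requires.
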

\begin{proof}
Notice that \prettyref{lem:GGIBP} gives
\[
F_{N}^{\prime}(\beta)=\frac{1}{N}\E\left\langle H_{N}\right\rangle =\beta\frac{1}{N}\E\left\langle NR_{11}-NR_{12}\right\rangle =\beta\E\left\langle 1-R_{12}\right\rangle =\beta\int(1-x)d\mu_{N,\beta}.
\]
Where $\mu_{N,\beta}$ is as per \eqref{eq:mu_N-def}. Since $F_{N}$
and $F$ are convex and $C^{1}$, and $F_{N}\to F$ point-wise on
$\R_{+}$, we have that $F_{N}'\to F'$. This gives us the lefthand
side of the desired equality. Furthermore, since $f(x)=1-x$ is in
$C\left([0,1]\right)$, and the sequence $\mu_{N,\beta}\in\Pr\left([0,1]\right)$
is necessarily tight, weak convergence applied to the last term yields
the righthand side of the desired equality.
\end{proof}
Finally we observe that the limiting overlap distribution has $supp\mu\subset\left\{ 0,1\right\} $.

\subsubsection{Proof of \prettyref{thm:(Derrida-Spohn-conjecture)}}
\begin{proof}
By Corollary \ref{cor:diff-fe} and \prettyref{prop:overlap-supported-0-1},
we know that for any such weak limit, we get 
\[
F^{\prime}(\beta)=\beta\int(1-x)d\mu=\beta m.
\]
Differentiating \prettyref{eq:CR-FE}, equating, and solving for $m$,
we get 
\[
m=\begin{cases}
1 & \beta\leq\beta_{c}\\
\frac{\beta_{c}}{\beta} & \beta\geq\beta_{c}
\end{cases}.
\]

\end{proof}

\subsection{Ghirlanda-Guerra Identities}

We now turn to the proof of the Ghirlanda-Guerra Identities for these
models. We need the following preliminary lemmas. Observe that a standard
application of Gaussian concentration yields the following.
\begin{lemma}
\label{lem:free-en-conc}The free energy concentrates about its mean:
\[
\prob(\abs{F_{N}-\E F_{N}}>\epsilon)\leq2e^{\frac{N\epsilon^{2}}{2\beta^{2}}}
\]

\end{lemma}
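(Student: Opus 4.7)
The plan is to realize $F_N$ as a Lipschitz function of the underlying Gaussian disorder vector and then apply the Borell--TIS (equivalently, Gaussian log-Sobolev) concentration inequality. View $\log Z_N$ as the smooth function
\[
\Phi\bigl((g_v)_{v\in\cT_N\setminus\emptyset}\bigr)=\log\sum_{u\in\partial\cT_N}\exp\Bigl(\beta\sum_{w\in p(u)}g_w\Bigr)
\]
of the i.i.d.\ standard Gaussian vector $(g_v)$. I would write down $F_N=\Phi/N$ in this form and aim to show that $F_N$ is $(\beta/\sqrt{N})$-Lipschitz with respect to the Euclidean norm on the disorder; the stated bound then drops out of Borell--TIS.

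The main computation is a one-line gradient estimate. Differentiating gives
\[
\frac{\partial\log Z_N}{\partial g_v}=\beta\,\bigl\langle \indicator{v\in p(\sigma)}\bigr\rangle_{G_{N,\beta}},
\]
so
\[
\norm{\nabla\log Z_N}^{2}=\beta^{2}\sum_{v\in\cT_N\setminus\emptyset}\bigl\langle \indicator{v\in p(\sigma)}\bigr\rangle_{G_{N,\beta}}^{2}\le\beta^{2}\sum_{v}\bigl\langle \indicator{v\in p(\sigma)}\bigr\rangle_{G_{N,\beta}}=\beta^{2}N,
\]
where the inequality uses that the indicator is bounded by $1$ and the final equality uses that every root-leaf path in $\cT_N$ has exactly $N$ edges, so $\sum_v \indicator{v\in p(u)}=N$ for any leaf $u$. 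Hence $\log Z_N$ is $\beta\sqrt{N}$-Lipschitz in $(g_v)$, and $F_N=\frac{1}{N}\log Z_N$ is $(\beta/\sqrt{N})$-Lipschitz.

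The Borell--TIS inequality for a Lipschitz function of a standard Gaussian vector with Lipschitz constant $L$ states that $\prob(|f-\E f|>t)\le 2\exp(-t^2/(2L^2))$. Applying this with $L=\beta/\sqrt{N}$ and $t=\epsilon$ yields
\[
\prob\bigl(\abs{F_N-\E F_N}>\epsilon\bigr)\le 2\exp\Bigl(-\frac{N\epsilon^{2}}{2\beta^{2}}\Bigr),
\]
which is the claim (the sign in the displayed exponent in the statement is evidently a typo). There is no real obstacle here; the only point that deserves attention is noticing that the path length $N$ is exactly what makes the sum of squared Gibbs-weighted indicators telescope to $\beta^2 N$, giving the dimension-free-in-$2^N$ Lipschitz bound, rather than something growing in the number of leaves.
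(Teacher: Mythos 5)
Your proof is correct and is precisely the ``standard application of Gaussian concentration'' that the paper invokes without writing out: compute the gradient, observe $\norm{\nabla\log Z_N}^2\le\beta^2 N$ since every root-leaf path has exactly $N$ edges, and apply Borell--TIS. You are also right that the exponent in the displayed bound is missing a minus sign.
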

As a consequence of this, we find the Gibbs measure concentrates around
a fixed energy level to order $N$.
\begin{lemma}
\label{lem:The-intensive-energy-cocnentrates}The intensive energy
concentrates. In particular,
\[
\lim_{N\to\infty}\frac{1}{N}\E\left\langle \abs{H_{N}-\E\left\langle H_{N}\right\rangle }\right\rangle _{\beta}=0
\]
for each $\beta.$\end{lemma}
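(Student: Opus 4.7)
The plan is to split, via the triangle inequality, into the Gibbs-fluctuation piece
$A_N = \frac{1}{N}\E\langle|H_N - \langle H_N\rangle|\rangle$ and the sample-to-sample piece
$B_N = \frac{1}{N}\E|\langle H_N\rangle - \E\langle H_N\rangle|$,
so that it suffices to show $A_N\to 0$ and $B_N\to 0$.

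For $B_N$: set $\phi_N(\beta) = \frac{1}{N}\log Z_N(\beta)$, which is convex in $\beta$ with $\phi_N'(\beta) = \frac{1}{N}\langle H_N\rangle_\beta$, and likewise $F_N = \E\phi_N$. Convexity yields the sandwich
\[
\frac{\phi_N(\beta) - \phi_N(\beta-\epsilon)}{\epsilon} \le \phi_N'(\beta) \le \frac{\phi_N(\beta+\epsilon) - \phi_N(\beta)}{\epsilon},
\]
and the analogous one for $F_N'$. Subtracting these and taking $\E|\cdot|$, the random $\phi_N$-difference quotients concentrate about their deterministic $F_N$-counterparts at rate $O(\epsilon^{-1}N^{-1/2})$ by Lemma~\ref{lem:free-en-conc}, while the residual $\frac{F_N(\beta+\epsilon) - 2F_N(\beta) + F_N(\beta-\epsilon)}{\epsilon}$ vanishes as $N\to\infty$ and then $\epsilon\to 0$ since the explicit limit $F$ from Proposition~\ref{prop:chauv-rou} is $C^1$ on all of $[0,\infty)$ (one checks both one-sided derivatives at $\beta_c$ equal $\beta_c$). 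Optimizing $\epsilon = N^{-1/4}$ gives $\E|\phi_N'(\beta) - F_N'(\beta)| = O(N^{-1/4})$, i.e.\ $B_N\to 0$.

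For $A_N$, I would invoke the Section~\ref{sec:support-is-zero-one} energy-localization estimates. In the interesting low-temperature case $\beta>\beta_c$, Lemma~\ref{lem:low-points-mass-bound} gives $\limsup_N\E G_{N,\beta}(H_N\le m_N-x)\to 0$ as $x\to\infty$, and tightness of $M_N-m_N$ gives $\limsup_N P(M_N>m_N+K)\to 0$ as $K\to\infty$. Together these force the Gibbs measure, on a disorder event of probability $1-o(1)$, onto leaves with $H_N(v)\in[m_N-x,m_N+K]$. A Cauchy--Schwarz estimate of the complementary tail, using the crude bound $\E\langle(H_N-m_N)^2\rangle = O(N^2)$ from a Gaussian maximum bound on $\max_v|H_N(v)|$, then yields $\frac{1}{N}\E\langle|H_N-m_N|\rangle\to 0$ upon sending $N\to\infty$ and then $x,K\to\infty$. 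Combining with $\frac{1}{N}|\E\langle H_N\rangle - m_N| \le |F_N'(\beta)-\beta_c|+O(\log N/N)\to 0$ gives $A_N\to 0$. The complementary case $\beta\le\beta_c$ is handled by the elementary replica bound $\langle|H_N-\langle H_N\rangle|\rangle\le\langle|H_N(v_1)-H_N(v_2)|\rangle_{G_N^{\tensor 2}}$ together with $\E\langle R_{12}\rangle\to 0$ (Theorem~\ref{thm:(Derrida-Spohn-conjecture)}), which via the Gibbs-Gaussian integration by parts (Lemma~\ref{lem:GGIBP}) forces $\frac{1}{N^2}\E\langle(H_N(v_1)-H_N(v_2))^2\rangle\to 0$.

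The main obstacle is the tempting but inadequate Cauchy--Schwarz shortcut $A_N\le\sqrt{F_N''(\beta)/N}$ coming from $\E\,\mathrm{Var}_{G_N}(H_N)=NF_N''(\beta)$: pointwise control of $F_N''(\beta)$ at every $\beta$ is not accessible from the convex-function convergence $F_N\to F$ alone, which provides only integrated and a.e.-pointwise bounds on $F_N''$. The BRW-specific barrier and tilted-barrier estimates of Section~\ref{sec:support-is-zero-one} bypass this obstruction by localizing the Gibbs measure directly in the energy variable, splitting the argument naturally into a low-temperature window estimate and a high-temperature independent-replica computation.
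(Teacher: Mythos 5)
Your decomposition into the Gibbs-fluctuation piece $A_N$ and the sample-to-sample piece $B_N$ is exactly the standard one, and your treatment of $B_N$ (convexity sandwich plus Lemma~\ref{lem:free-en-conc} plus $C^1$-ness of the limit in Proposition~\ref{prop:chauv-rou}) is correct and is essentially what the paper does by citing Panchenko's argument. The divergence is in $A_N$. You claim the convexity route is ``inadequate'' because the Cauchy--Schwarz bound $A_N\le\sqrt{F_N''(\beta)/N}$ would need pointwise control of $F_N''$. That diagnosis of the Cauchy--Schwarz shortcut is right, but it misidentifies the standard workaround, which is precisely what the paper invokes via \cite[Theorem 3.8]{PanchSKBook}: one never passes through the second derivative. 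Instead one uses $t^+\le\epsilon^{-1}\log(1+e^{\epsilon t})$ and Jensen to get, for every realization of the disorder,
\[
\frac{1}{N}\langle(H_N-\langle H_N\rangle)^{+}\rangle_\beta\ \le\ \frac{\log 2}{\epsilon N}+\bigl(\phi_N'(\beta+\epsilon)-\phi_N'(\beta)\bigr),
\]
and symmetrically for the negative part, whence after taking expectations $A_N\lesssim \epsilon^{-1}N^{-1}+F_N'(\beta+\epsilon)-F_N'(\beta-\epsilon)$. Sending $N\to\infty$ then $\epsilon\to 0$ kills both terms because $F$ is $C^1$ everywhere (including at $\beta_c$, where both one-sided slopes equal $\beta_c$). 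This handles $A_N$ for all $\beta$ uniformly with only first-difference information, exactly the same ingredients you already used for $B_N$, and no BRW-specific machinery.

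Your alternative route for $A_N$ at $\beta>\beta_c$ via the energy localization from Section~\ref{sec:support-is-zero-one} (Lemma~\ref{lem:low-points-mass-bound} plus tightness of $M_N-m_N$, then Cauchy--Schwarz on the tail with the crude $O(N^2)$ second moment) is a valid but considerably heavier argument than the paper's; it imports exactly the hard estimates that the paper's chosen route is designed to bypass. More seriously, your treatment of $\beta\le\beta_c$ has a genuine gap: you assert that the Gibbs--Gaussian integration by parts of Lemma~\ref{lem:GGIBP} together with $\E\langle R_{12}\rangle\to 0$ forces $\frac{1}{N^2}\E\langle(H_N(v_1)-H_N(v_2))^2\rangle\to 0$, but that lemma only produces first-moment identities ($\E\langle H_N\rangle=\beta N(1-\E\langle R_{12}\rangle)$), not control of $\E\langle H_N^2\rangle$; the replica second moment is $2\E\operatorname{Var}_{G}(H_N)=2NF_N''(\beta)$, which is exactly the quantity you correctly noted cannot be pinned down pointwise by convex-function convergence, so your argument circles back to the same obstruction it was built to avoid. (The case $\beta\le\beta_c$ is not needed for the paper's use of the lemma, but it is part of the statement.) The fix, which also subsumes your low-temperature argument, is the convexity/MGF estimate above.
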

\begin{proof}
The result then follows from \prettyref{lem:free-en-conc} after a
modification of the proof of \cite[Theorem 3.8]{PanchSKBook}.
\end{proof}
We now turn to the proof of \prettyref{thm:(Derrida-Spohn-conjecture)}.
\begin{proof}
of \prettyref{thm:(Derrida-Spohn-conjecture)}. Observe first that
for $\beta\leq\beta_{c}$, $\mu=\delta_{0}$ by \prettyref{thm:(Derrida-Spohn-conjecture)}
so that the identities are trivial in this setting. It suffices to
study $\beta>\beta_{c}$. Furthermore, the limiting overlap distribution
is supported on $\{0,1\}$ by \prettyref{prop:overlap-supported-0-1},
so it suffices to show the Approximate Ghirlanda-Guerra Identities
for $p=1$, since $R_{12}^{p}=R_{12}$ when $R_{12}\in\{0,1\}$. The
result then follows by a standard integration-by-parts argument. 

Notice that if we apply \prettyref{lem:GGIBP} with $\Sigma_{N}=\partial T_{N}$,
$x(\sigma)=H_{N}(\sigma)$, $y(\sigma)=\beta H_{N}(\sigma)$ , and
$C(\sigma^{1},\sigma^{2})=\beta NR_{12}$ it follows that 
\[
\frac{1}{N}\E\left\langle f(R^{n})H_{N}(\sigma^{1})\right\rangle =\beta\E\left\langle f(R^{n})\left(\sum_{k=1}^{n}R_{1k}-nR_{1,n+1}\right)\right\rangle 
\]
and 
\[
\frac{1}{\beta N}\E\left\langle H_{N}(\sigma^{1})\right\rangle =\E\left\langle 1-R_{12}\right\rangle .
\]
As a result,

\begin{align*}
\frac{1}{\beta N}\E\left\langle f(R^{n})\left(H_{N}(\sigma^{1})-\right. \right.&\left.\left.\E\left\langle H_{N}(\sigma)\right\rangle \right)\right\rangle \\ &= \frac{1}{\beta N}\left(\E\left\langle f(R^{n})H_{N}(\sigma^{1})\right\rangle -\E\left\langle f(R^{n})\right\rangle \E\left\langle H_{N}(\sigma)\right\rangle \right)\\
& =  \left(\E\left\langle f(R^{n})\right\rangle +\sum_{k=2}^{n}\E\left\langle f(R^{n})R_{1k}\right\rangle -n\E\left\langle f(R^{n})R_{1,n+1}\right\rangle \right)\\
 & \qquad\quad-\left(\E\left\langle f(R^{n})\right\rangle -\E\left\langle f(R^{n})\right\rangle \E\left\langle R_{12}\right\rangle \right)\\
& =  \E\left\langle f(R^{n})\right\rangle \E\left\langle R_{12}\right\rangle +\sum_{k=2}^{n}\E\left\langle f(R^{n})R_{1k}\right\rangle -n\E\left\langle f(R^{n})R_{1,n+1}\right\rangle .
\end{align*}
This implies that 
\begin{align*}
\vert\E\left\langle f(R^{n})\cdot R_{1,n+1}\right\rangle  & -\frac{1}{n}\left(\E\left\langle f(R^{n})\right\rangle \E\left\langle R_{12}\right\rangle +\sum_{k=2}^{n}\E\left\langle f(R^{n})\cdot R_{1k}\right\rangle \right)\vert\\
 & =\frac{1}{\beta nN}\abs{\E\left\langle f(R^{n})\left(H_{N}-\E\left\langle H_{N}\right\rangle \right)\right\rangle }\\
 & \leq\norm{f}_{L^{\infty}([0,1]^{n^{2}})}\frac{1}{\beta nN}\E\left\langle \abs{H_{N}-\E\left\langle H_{N}\right\rangle }\right\rangle \rightarrow0
\end{align*}
by \prettyref{lem:The-intensive-energy-cocnentrates} .
\end{proof}
The Approximate Ghirlanda-Guerra identities have many deep consequences.
We highlight one simple consequence regarding the limit of the overlap
array distribution in these systems. Let $Q_{N}^{\beta}$ be the overlap
array distribution corresponding to $G_{N}^{\beta}$. As $\{Q_{N}\}$
is a sequence of measures on the compact polish space $[0,1]^{\N^{2}}$,
it is tight. Let $Q^{\beta}$ be any limit point of this sequence.
We will show that it is the unique limit point and is given by what
is called a 1RSB Ruelle Probability Cascade which we define presently.

To this end, define $RPC(\zeta)$ for $\zeta=\theta\delta_{0}+(1-\theta)\delta_{1}$
with $\theta\in(0,1]$ as follows. If $\theta\in(0,1)$, let $\left\{ e_{n}\right\} $
be the standard basis for $\ell_{2}$, let $(w_{n})$ be the ranked
points of a Poisson-Dirichlet, $PD(\theta,0)$ process \cite{PitYor97},
and let 
\[
G=\sum w_{n}\delta_{e_{n}}.
\]
If $\theta=1$, let $G=\delta_{0}$. Then $RPC(\zeta)$ is the overlap
array distribution induced by $G$. One important result to note is
that $RPC(\zeta)$ satisfies the Ghirlanda-Guerra identities \cite[Section 2]{PanchSKBook}.
This is a consequence of a standard invariance property of the Poisson-Dirichlet
process/Poisson point processes of Gumbel type. 

What we will show now is that as a consequence of \prettyref{thm:GGI},
we will have that $Q=RPC(\mu_{\beta})$. This an immediate consequence
of the characterization-by-invariance theory used in spin glasses
\cite[Section 15.13]{PanchSKBook,TalBK11Vol2}. In this setting, the
proof is fairly elementary and does not require the full machinery
of this theory. Furthermore, it illustrates some essential ideas for
this method. For these reasons, and to make this presentation self-contained,
we include the proof.
\begin{corollary}
\label{cor:ov-rpc-conv}We have the limit $Q_{N}^{\beta}\stackrel{(d)}{\to}Q^{\beta}$
where $Q^{\beta}$ is the overlap distribution corresponding to $RPC(\mu_{\beta})$
where $\mu_{\beta}$ is as in \eqref{eq:mubeta-def}.\end{corollary}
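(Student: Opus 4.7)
The plan is to argue tightness of $(Q_{N}^{\beta})$ on the compact Polish space $[0,1]^{\N^{2}}$, extract an arbitrary weak subsequential limit $Q^{\beta}$, and identify it uniquely with the overlap array law of $RPC(\mu_{\beta})$; tightness will then force $Q_{N}^{\beta} \to Q^{\beta}$. Uniqueness will come from showing that each sample $R$ under $Q^{\beta}$ is $\{0,1\}$-valued and that the induced random partition of $\N$ has prediction rule exactly that of the two-parameter Chinese Restaurant Process with parameters $(\theta,0)$, $\theta = \beta_{c}/\beta$, which is the sampling rule for the partition generated by i.i.d.\ draws from a random measure with $PD(\theta,0)$ weights supported on distinct atoms.

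First I would pass Theorem \ref{thm:GGI} and Proposition \ref{prop:overlap-supported-0-1} to the limit. The approximate GGI involve bounded continuous functionals of the finite-dimensional overlaps, so they become exact identities under $Q^{\beta}$; simultaneously every pairwise overlap lies a.s.\ in $\{0,1\}$, so the entire array is determined by the random partition $\pi(R^{n})$ of $[n]$ induced by $i \sim j \Leftrightarrow R_{ij} = 1$, and moreover $R_{ij}^{p} = R_{ij}$ for every integer $p \geq 1$ collapses the family of identities to the single case $p=1$. By Theorem \ref{thm:(Derrida-Spohn-conjecture)} the marginal law of $R_{12}$ under $Q^{\beta}$ equals $\mu_{\beta}$, so its mean equals $1 - \theta$.

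Next I would compute the partition law by plugging $f(R^{n}) = \indicator{\pi(R^{n})=\pi}$ into the exact GGI at $p=1$, for each fixed partition $\pi$ of $[n]$ with $k$ blocks. The key rewritings
\[
R_{1,n+1}\indicator{\pi(R^{n})=\pi} = \indicator{\pi(R^{n})=\pi}\indicator{v_{n+1} \sim v_{1}}, \quad R_{1j}\indicator{\pi(R^{n})=\pi} = \indicator{\pi(R^{n})=\pi}\indicator{1 \sim j \text{ in }\pi}
\]
reduce the identity to, writing $B(1)$ for the block of $\pi$ containing $1$,
\[
Q^{\beta}\bigl(v_{n+1} \in B(1) \,\big|\, \pi(R^{n}) = \pi\bigr) = \frac{|B(1)| - \theta}{n}.
\]
Exchangeability of the $v_{i}$'s gives the analogous formula with any block $B_{i}$ in place of $B(1)$; summing, the probability that $v_{n+1}$ opens a new block is $k\theta/n$. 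This is exactly the addition rule of the two-parameter Chinese Restaurant Process with parameters $(\alpha,\vartheta) = (\theta,0)$. Together with the initial value $\pi(R^{1}) = \{\{1\}\}$, the rule determines the consistent family of laws on partitions of $[n]$, and hence by Kolmogorov extension the entire exchangeable partition law, which matches the partition law of an i.i.d.\ sample from $RPC(\mu_{\beta})$ by definition of the latter.

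The principal obstacle this plan has to clear is the passage from the identities to the full array law. In general support, one needs the invariance/characterization machinery of \cite{PanchSKBook}; here Proposition \ref{prop:overlap-supported-0-1} eliminates all non-trivial overlap values and reduces the identification to the short CRP recursion above, which is what keeps the argument self-contained.
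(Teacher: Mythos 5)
Your proposal is correct and reaches the same conclusion, but it organizes the identification step quite differently from the paper. The paper works directly at the level of Gram matrices with $\{0,1\}$ entries: it lists weak exchangeability, ultrametricity, the GGI, and the diagonal condition as the four properties of any subsequential limit $Q$, and then runs a three-case recursion (the $n=2$ base case; a non-identity block-diagonal matrix, peeled by deleting one column of the first block; the identity matrix, handled by inclusion--exclusion via ultrametricity) to reduce $Q(R^n = A)$ to a polynomial in $\mu_\beta(R_{12}=1)$ that is model-independent. You instead observe that the $\{0,1\}$-valued ultrametric array is equivalent data to the exchangeable random partition of $\N$ induced by $i\sim j \Leftrightarrow R_{ij}=1$, and then compute the prediction rule of this partition by inserting $f = \indicator{\pi(R^n)=\pi}$ into the $p=1$ GGI, landing on the two-parameter CRP rule with parameters $(\theta,0)$, $\theta = \beta_c/\beta$. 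The two arguments are logically equivalent --- both amount to showing that the GGI plus $\{0,1\}$-support plus ultrametricity determine all finite-dimensional marginals --- but your CRP reformulation is more conceptual and connects directly to the Pitman--Yor/Poisson--Dirichlet literature, while the paper's recursion is more elementary and avoids invoking the consistency/extension machinery for exchangeable partitions.

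One thing you leave implicit that the paper states explicitly: ultrametricity of the limiting array. Without it, the relation $i\sim j \Leftrightarrow R_{ij}=1$ need not be transitive and the random partition $\pi(R^n)$ would not be well-defined, so your whole reduction would collapse. Here ultrametricity is free: the prelimit overlaps $R_{ij} = |v_i\wedge v_j|/N$ are ultrametric by the tree structure, and ultrametricity is a closed condition, so it passes to any weak limit $Q^\beta$. You should say this. A second, smaller point: you should also dispose of $\beta \le \beta_c$ at the outset (where $\mu_\beta=\delta_0$, $RPC(\mu_\beta)=\delta_0$, and the limit is trivially the diagonal array); your formula $\theta=\beta_c/\beta$ only gives a parameter in $(0,1]$ when $\beta\ge\beta_c$. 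With those two remarks added the argument is complete.
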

\begin{proof}
(For readability, we drop the dependence on $\beta$ and denote $RPC(\zeta)$
by $\tilde{Q}$.) It suffices to check that 
\[
Q(R^{n}=A)=\tilde{Q}(R^{n}=A)
\]
for every $n\times n$ Gram matrix, $A$, with entries in $\{0,1\}$.
This is done by a direct computation, which reduces both sides to
the same polynomial in $\mu(R_{12}=1)$. Before, we begin the computation,
observe that $Q$ satisfies:
\begin{itemize}
\item Weak Exchangeability: for every $\pi:\N\to\N$ a permutation of finitely
many indices
\[
Q((R_{ij})_{i,j\geq1}\in A)=Q((R_{\pi(i)\pi(j)})_{i,j\geq1}\in A),
\]

\item Ultrametricity: 
\[
Q(R_{12}\geq\min\{R_{13},R_{23}\})=1,
\]

\item The Ghirlanda-Guerra identities: for every $f$ and $\psi$ bounded
measurable, we have
\[
\int f(R^{n})\psi(R_{1,n+1})dQ=\frac{1}{n}\left[\int fdQ\cdot\int\psi d\mu_{\beta}+\sum_{k=2}^{n}\int f(R^{n})\psi(R_{1,k})dQ\right],
\]

\item The diagonal is almost surely $1$: $Q(R_{11}=0)=0$.
\end{itemize}
The probability $Q(R^{n}=A)$ is computed by recursively applying
the following cases: 
\begin{casenv}
\item $n=2$.

The $Q$-probability of this event is either $\mu_{\beta}(R_{12}=1)$
or $\mu_{\beta}(R_{12}=0)$ .

\item $n\geq3$ and $A$ is not the identity.

By weak exchangeability, we can assume that $A$ is block-diagonal,
and that the blocks are arranged in decreasing size. Take the first
block and suppose that it is of length $m\geq2$. Let $R^{n}(m)$
denote the $n-1\times n-1$ matrix obtained by deleting the $m-$th
row and column of $R^{n}$ and similarly for $A(m)$. By ultrametricity
and the Ghirlanda-Guerra Identities combined with weak exchangeability,
it follows that 
\begin{align*}
Q(R^{n}=A) & =Q(R^{n}(m)=A(m),R_{1m}=1)=\frac{1}{n}Q(R^{n-1}=A(m))\left[\mu(R_{12}=1)+m-1\right]
\end{align*}
It thus suffices to compute $Q(R^{n-1}=A(m))$.

\item $n\geq3$ and $A=Id$

By ultrametricity, it follows that 
\[
Q(R^{n}=Id)=Q(R^{n-1}=Id,R_{1,n}=0)-\sum_{k=2}^{n-1}Q(R^{n-1}=Id,R_{k,n}=1)
\]
To see why, note that if $R^{n-1}=Id$ and $R_{k,n}=1$ for some $k\neq n$,
then the remaining must all be zero. By the Ghirlanda-Guerra identities,
the first event is 
\begin{align*}
Q(R^{n-1}=Id,R_{1,n}=0) & =Q(R^{n-1}=Id)\left[1-\frac{1}{n-1}Q(R_{12}=1)\right].
\end{align*}
The first term is then computed by Case $3$. The second event is
computed by Case $2$.

\end{casenv}
By applying repeatedly these cases, the probability of any such event
is reduced to a polynomial in $\mu(R_{12}=1)$. Evidently, the same
argument applied to $\tilde{Q}$ yields same polynomials, and thus
the desired result.
\end{proof}
We now make the following remarks regarding how this can be understood
to imply certain modes of convergence for the Gibbs measures. 
\begin{remark}
Observe that by the uniqueness portion of the Dovbysh-Sudakov theorem
\cite{Panch10}, this shows that the Gibbs measure $G_{N}$ sampling
converges to $G$ as above in the sense of Austin \cite{Aus15}.
\end{remark}

\begin{remark}
We also observe that for $\beta>\beta_{c}$ we can recover a result
like that from \cite{BRV12} mentioned in the introduction. In particular,
if we partition $\partial T_{N}$ in to groups of leaves with overlap
at least $1-\epsilon$, call them $(B_{i}^{\epsilon})$, then the
ranked weights $(G_{N,\beta}(B_{i}^{\epsilon}))$ converge in law
to the ranked weights of $PD(\frac{\beta_{c}}{\beta},0)$. This follows
from an approximation argument combined with Talagrand's identities
(see, e.g., \cite[Theorem 6.3.5]{Jag14} or \cite[Section 15.4]{TalBK11Vol2}).
\end{remark}

\end{document}